\newcolumntype{C}[1]{>{\centering\arraybackslash}p{#1}}
\definecolor{navy}{HTML}{2F729C} 
\newcommand{\A}{{\mathbb A}}
\newcommand{\Q}{{\mathbb Q}}
\newcommand{\Z}{{\mathbb Z}}
\newcommand{\C}{{\mathbb C}}
\newcommand{\p}{\mathfrak p}
\newcommand{\OF}{{\mathfrak o}}
\newcommand{\GL}{{\rm GL}}
\newcommand{\Gal}{{\rm Gal}}
\newtheorem{theorem}{Theorem}[section]
\newtheorem{lemma}[theorem]{Lemma}
\theoremstyle{definition}
\newenvironment{claim}[1][Claim]{\begin{trivlist}
\item[\hskip \labelsep {\bfseries #1}]}{\end{trivlist}}
\theoremstyle{remark}
\numberwithin{equation}{section}
\begin{document}

\title[Representations attached to elliptic curves]{Representations attached to elliptic curves \\with a non-trivial odd torsion point}

\author{Alexander J. Barrios}
\address{Department of Mathematics and Statistics, Carleton College, Northfield, Minnesota}
\email{abarrios@carleton.edu}

\author{Manami Roy}
\address{Department of Mathematics, Fordham University, Bronx, New York 10458}
\email{mroy17@fordham.edu}

\subjclass[2020]{Primary 11G05, 11G07, 11F70}

\keywords{Elliptic Curves, multiplicative reduction, automorphic representations.}

\begin{abstract}
We give a classification of the cuspidal automorphic representations attached to rational elliptic curves with a non-trivial torsion point of odd order. Such elliptic curves are parameterizable, and in this paper, we find the necessary and sufficient conditions on the parameters to determine when split or non-split multiplicative reduction occurs. Using this and the known results on when additive reduction occurs for these parametrized curves, we classify the automorphic representations in terms of the parameters.
\end{abstract}
\maketitle

\section{Introduction}
Given an elliptic curve $E/\mathbb{Q}$, there is a cuspidal
automorphic representation $\pi\cong\otimes_{p\le \infty}\pi_{p}$ of
$\operatorname*{GL}\!\left(  2,\mathbb{A}_{\mathbb{Q}}\right)$ attached to $E$, where $\pi_{p}$ is the representation of $\GL(2,\Q_p)$ associated to $E/\Q_p$ \cite{Gelbart1976}. In this article, we consider the case when $E$ has a non-trivial torsion point of odd order and explicitly classify the cuspidal automorphic
representation $\pi$ attached to $E$.
Our classification is attained by studying parameterized families of
elliptic curves. Specifically, let $T$ be one of the eight torsion subgroups with a non-trivial point of odd order allowed by Mazur's Torsion Theorem \cite{Mazur1977}. Then there is a family of elliptic curves $E_{T}/\Q$ (see Table \ref{ta:ETmodel}) with the property that if $E$ is an elliptic curve such that $T\hookrightarrow E(\mathbb{Q})$, then $E$ is $\mathbb{Q}$-isomorphic to a member of the parameterized family $E_{T}$. We note that for $T=C_{3}$, we must consider two different families of elliptic curves.

For the parameterized family $E_{T}$, there are necessary and
sufficient conditions on the parameters of $E_{T}$ which determine the minimal discriminant and primes at which additive reduction occurs \cite{Barrios2020}. 
In addition, the conductor exponent and N\'{e}ron type at each prime for which $E_T$ has additive reduction can be determined from the parameters of $E_T$ \cite{BarRoy}. To get a complete classification of the automorphic
representation attached to $E_T$, we require knowledge of when $E_T$ has split or non-split multiplicative reduction. We consider this case in Section~\ref{Sec multi}, where we prove the following result (see Theorem \ref{ThmMultRed}):
\begin{claim}
[Theorem 1.]\textit{There are necessary and sufficient conditions of the parameters of
$E_{T}/\Q$ to determine the primes at which split and non-split multiplicative
reduction occur.}
\end{claim}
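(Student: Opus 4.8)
The plan is to reduce the split/non-split dichotomy at each prime of multiplicative reduction to a single squareness condition on the invariants $c_4, c_6, \Delta$ of a minimal model, and then to evaluate that condition explicitly against the parameters of each family $E_T$ in Table~\ref{ta:ETmodel}. The classical tool is the following: if $E/\Q$ has multiplicative reduction at a prime $p \ge 5$, then $c_6$ is a $p$-adic unit (since $c_4^3 - c_6^2 = 1728\Delta$ with $\val_p(c_4) = 0$ and $\val_p(\Delta) > 0$ forces $\val_p(c_6)=0$), and the reduction is split if and only if $-c_6$ is a square modulo $p$, i.e. $\left(\frac{-c_6}{p}\right) = 1$; otherwise it is non-split. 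For $p = 2, 3$ the analogous criterion must be extracted directly from the tangent directions at the node (equivalently from Tate's algorithm), and these primes will be treated separately.

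First I would identify, purely in terms of the parameters, the set of primes at which $E_T$ has multiplicative reduction. By \cite{Barrios2020} the minimal discriminant $\Delta_{\min}$ of $E_T$ and the primes of additive reduction are known functions of the parameters, and by \cite{BarRoy} the additive primes are fully accounted for. Since a bad prime is multiplicative precisely when $p \mid \Delta_{\min}$ and $p \nmid c_4$, the multiplicative primes are exactly the bad primes that are not additive; this already yields a parameter description of the multiplicative locus.

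Next, for each of the eight families (two in the case $T = C_3$), I would record the minimal $c_6$ and, at every multiplicative prime $p \ge 5$, reduce $c_6$ modulo $p$ as a polynomial in the parameters and evaluate $\left(\frac{-c_6}{p}\right)$. Because the families are defined by explicit low-degree models, $c_6$ is an explicit polynomial, and the squareness condition translates into congruence and Legendre-symbol conditions on the parameters; where a parameter itself contributes a square factor to $-c_6$ it drops out of the symbol, which is what produces clean necessary and sufficient statements. The small primes $p = 2$ and $p = 3$ are handled by a direct nodal-tangent computation on the reduced (possibly non-minimal, hence first made minimal) equation.

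The main obstacle will be the bookkeeping across families and residue characteristics rather than any single deep step: correctly pinning down minimality at $p = 2, 3$ (so that the right $c_6$ is used), disentangling the contribution of square parameter factors inside the Legendre symbol, and organizing the resulting congruences into uniform necessary-and-sufficient conditions. Quadratic reciprocity together with a careful split according to the residues of the parameters modulo small primes should convert each per-prime condition into the final parameter criteria asserted in the statement.
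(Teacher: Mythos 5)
Your proposal is correct and follows essentially the same route as the paper: first determine the multiplicative primes from the known minimal discriminant and additive-reduction conditions (the paper's Lemma~\ref{LemmaMult}), then decide split versus non-split by a quadratic-residue criterion at $p\geq 5$ and by direct Tate's-algorithm/nodal-tangent computations on explicitly minimalized models at $p=2,3$ (the paper's models $F_{T,j}$). Your criterion $\left(\frac{-c_6}{p}\right)=1$ is equivalent to the paper's cited criterion $\left(\frac{-c_4c_6}{p}\right)=1$, since multiplicative reduction forces $c_4\equiv (c_6/c_4)^2 \pmod p$ to be a square modulo~$p$.
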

In particular, \cite[Main Theorem]{BarRoy} and Theorem \ref{ThmMultRed} give us the conditions on the parameters of $E_T$ which determine the N\'{e}ron type and conductor exponent at each prime. 
Using this information about $E_T$, we find the cuspidal automorphic representation attached to $E_T$ in Section~\ref{sectionreps}.
We do this by using a description of the local representation of
$\operatorname*{GL}\!\left(  2,
\mathbb{Q}
_{p}\right) $ attached to an elliptic curve over $\mathbb{Q}_p$ given in terms of its Weierstrass coefficients (see \cite[Section~2]{roy2019paramodular}). With this, we can state our main theorem.

\begin{claim}
[Theorem 2.]\textit{There are necessary and sufficient conditions of the parameters of
$E_{T}/\Q$ to determine the cuspidal automorphic representation $\pi\cong\otimes_{p\le \infty}%
\pi_{p}$ of $\operatorname*{GL}\!\left(  2,\mathbb{A}_{\mathbb{Q}
}\right)$ attached to~$E_T$.}
\end{claim}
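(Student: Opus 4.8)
The plan is to prove Theorem 2 by combining three ingredients into a single bookkeeping statement that is indexed by the parameters of $E_T$. The local representation $\pi_p$ attached to $E/\Q_p$ is determined by the reduction type of $E$ at $p$ together with, in the additive case, the N\'eron type and conductor exponent. Concretely, $\pi_p$ is an unramified principal series when $E$ has good reduction, a Steinberg (respectively unramified-quadratic-twist of Steinberg) representation when $E$ has split (respectively non-split) multiplicative reduction, and either a ramified principal series, a ramified twist of Steinberg, or a supercuspidal representation when $E$ has additive reduction, with the precise local type read off from the N\'eron--Kodaira type and conductor exponent via the dictionary in \cite[Section~2]{roy2019paramodular}. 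The archimedean component $\pi_\infty$ is always the same discrete series of weight $2$. So the entire content of Theorem 2 is to turn this reduction-type data into explicit conditions on the parameters.

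First I would recall the dictionary from \cite[Section~2]{roy2019paramodular} expressing each local representation $\pi_p$ in terms of the Weierstrass data of $E/\Q_p$, and record $\pi_\infty$ once and for all. Second, for each of the eight families $E_T$ in Table \ref{ta:ETmodel} (treating the two $C_3$ families separately), I would invoke \cite[Main Theorem]{BarRoy} to obtain, as an explicit function of the parameters, the set of primes of additive reduction together with the N\'eron type and conductor exponent at each such prime; simultaneously I would invoke Theorem \ref{ThmMultRed} to obtain the explicit parameter conditions distinguishing split from non-split multiplicative reduction, and the complementary condition characterizing good reduction. Third, I would substitute this reduction-type data into the local dictionary of the first step, prime by prime, to name each $\pi_p$ explicitly, and then assemble the restricted tensor product $\pi\cong\otimes_{p\le\infty}\pi_p$. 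The result is a case-by-case table, organized by the family $T$ and by the parameter congruence and valuation conditions, giving the full list of local components.

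The main obstacle, I expect, is not any single deep step but the combinatorial bookkeeping: the split/non-split multiplicative dichotomy from Theorem \ref{ThmMultRed} and the additive-reduction data from \cite[Main Theorem]{BarRoy} are each governed by congruence and valuation conditions on the parameters, and these must be correctly intersected and made mutually exclusive and exhaustive so that every prime is assigned exactly one local type. Particular care is required at the small primes $p=2$ and $p=3$, where additive reduction is most prevalent and where the parametrizations for the various $T$ interact most delicately with the reduction criteria; the two distinct families needed for $T=C_3$ must also be handled in parallel without double-counting. Once the conditions from the two cited results are arranged into a clean partition of the parameter space, substitution into the local dictionary is essentially automatic, so the theorem follows by assembling the local descriptions into the global representation.
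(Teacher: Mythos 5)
Your plan has exactly the architecture of the paper's proof: Theorem 2 is deduced by combining the additive-reduction data of \cite{BarRoy}, the split/non-split multiplicative criteria of Theorem~\ref{ThmMultRed}, and the local dictionary of \cite[Section~2]{roy2019paramodular}, assembled family by family into explicit case tables (this is precisely how Theorems~\ref{ThmforC3}--\ref{ThmotherTs} are organized, with $\pi_\infty$, the good-reduction primes, and the Steinberg/twisted-Steinberg primes handled exactly as you describe).

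There is, however, one concrete step in your plan that would fail as stated. You propose to read off \emph{every} additive local component from the N\'eron type and conductor exponent ``via the dictionary in \cite[Section~2]{roy2019paramodular},'' but the results there that convert N\'eron data into a named representation (the ones the paper invokes: Theorems 2.2.1, 2.2.2, 2.4.1, 2.5.3 and Corollary 2.4.3 of that reference) are applied only at odd primes. Additive reduction at $p=2$ genuinely occurs in these families: $E_{C_3}$ and $E_{C_3^0}$ have $f_2=2$ when $v_2(a)\equiv 1,2 \pmod 3$, and $E_{C_6}$ has $f_2=2$ when $v_2(a+b)=1,2$. To handle these cases the paper needs an ingredient your proposal does not identify, namely Lemma~\ref{rep of conductor 2}: since $\Q_2^{\times}$ has no character of conductor $1$, a conductor-$2$ representation of $\GL(2,\Q_2)$ with trivial central character must be supercuspidal, and by Tunnell's classification \cite{Tunnell1978} together with a count of characters of the residue extension it is the \emph{unique} dihedral supercuspidal $\omega_{F,\xi}$ with $F=\Q_2(\sqrt{5})$ and $a(\xi)=1$; the order of $\xi|_{\OF_F^{\times}}$ is then pinned down by Kraus's theorem \cite{Kraus1990}. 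Your remark that ``particular care is required at $p=2$ and $p=3$'' gestures at the difficulty but supplies no argument, and no intersection of the parameter conditions from \cite{BarRoy} and Theorem~\ref{ThmMultRed} can substitute for this uniqueness lemma. Aside from this missing ingredient at $p=2$, your outline matches the paper's proof.
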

This is a consequence of Theorems~\ref{ThmforC3}-\ref{ThmotherTs} since we consider some of the parameterized families~$E_{T}$ separately. Next, we give a brief review of the necessary background to prove our results.

\subsection{Elliptic curves and parameterizations.} We start by giving a brief review of elliptic curves. For further details, see \cite{Silverman2009} and \cite{Silverman1994}.
Let $E$ be an elliptic curve over a field $K$ given by the Weierstrass model
{
	\begin{equation}
		E:y^{2}+a_{1}xy+a_{3}y=x^{3}+a_{2}x^{2}+a_{4}x+a_{6}.\label{ch:inintroweier}%
	\end{equation}
	with each Weierstrass coefficient $a_{j} \in K$. We define the following quantities
	\begin{equation}
		\begin{split}
			\label{basicformulas}
		c_{4}&=a_{1}^{4}+8a_{1}^{2}a_{2}-24a_{1}a_{3}+16a_{2}^{2}-48a_{4},\\
c_{6}&=-\left(  a_{1}^{2}+4a_{2}\right)  ^{3}+36\left(  a_{1}^{2}%
+4a_{2}\right)  \left(  2a_{4}+a_{1}a_{3}\right)  -216\left(  a_{3}^{2}%
+4a_{6}\right), \\ 
			\Delta&=\frac{c_4^3-c_6^2}{1728},\qquad\quad  j=\frac{c_{4}^{3}}{\Delta}.
		\end{split}
\end{equation}}
We call $c_{4}$ and $c_{6}$ \textit{the invariants associated to the Weierstrass model} of $E$, $\Delta$ is the \emph{discriminant}, and $j$ is the \emph{$j$-invariant} of $E$. 

For each prime $p$, let $v_p$ denote the $p$-adic valuation of $\mathbb{Q}_p$. Each elliptic curve $E$ over~$\Q_{p}$ is $\Q_{p}$-isomorphic to an elliptic curve given by a model of the form \eqref{ch:inintroweier} such that each~$a_{j}\in\mathbb{Z}_{p}$ and 
$v_{p}\!\left(  \Delta\right)\ge0 $ is minimal.
We call this curve a \textit{minimal model} for $E$ and the associated discriminant of this elliptic curve is the \textit{minimal discriminant} of $E$. Next, let $c_4$ and $c_6$ be the invariants associated to a minimal model of $E/\Q_{p}$ and let $\Delta$ denote its minimal discriminant. Then $E$ is said to have

\begin{enumerate}
    \item[(i)] \textit{good reduction}  if $v_p(\Delta)=0.$
\item[(ii)] \textit{multiplicative reduction} if $v_p(\Delta)>0$  and  $v_p(c_4)=0$.
\item[(iii)]  \textit{additive reduction}  if $v_p(\Delta)>0$ and $v_p(c_4)>0$.
\item[(iv)]  \textit{potentially multiplicative reduction} if $v_p(j)<0$.
\item[(v)]  \textit{potentially good reduction} if $v_p(j)\geq0$.
\end{enumerate}
Note that when $E/\mathbb{Q}_p$ has multiplicative reduction, we say that the reduction is \textit{split} if the slopes of the tangent lines at the node are in $\mathbb{F}_p$. Otherwise, it is said to be \textit{nonsplit}. 
When $E/\Q_p$ has potentially multiplicative reduction, we define the \emph{$\gamma$-invariant} of $E/\Q_p$~by
\begin{equation}
\gamma(E/\Q_p)=-\frac{c_4}{c_6} \in \Q_p^{\times}/\Q_p^{\times2}.
\end{equation}
This quantity is well defined and independent of the choice of the Weierstrass equation. 

For an elliptic curve $E/\mathbb{Q}$, we define the
\textit{conductor} $N_{E}$ to be the quantity $N_{E}=\prod_{p}p^{f_{p}}$ where $f_{p}$ is a nonnegative integer called \textit{the conductor exponent at} $p$ and it is computed via Tate's Algorithm \cite{Tate1975}. We note that when $f_p=0$, the elliptic curve has good reduction over $\Q_p$.
Now suppose further that $E$ has a non-trivial torsion point of odd order. By Mazur's Torsion Theorem, $E(\mathbb{Q})_{\text{tors}}$ is isomorphic to one of eight possible torsion subgroups~$T$. Moreover, $E$ occurs in a parameterized family of elliptic curves \cite[Table~3]{Kubert1976}. In this article, we consider a modification of these parameterized families, namely the family elliptic curves $E_T=E_T(a,b)$ where the Weierstrass model of $E_{T}$ is as given in Table~\ref{ta:ETmodel}. By \cite[Proposition~4.3]{Barrios2020}, there are relatively prime integers $a$ and $b$ such that $E$ is $\mathbb{Q}$-isomorphic to~$E_T$.

{\renewcommand*{\arraystretch}{1.3}
\begin{longtable}{C{0.6in}C{1.2in}C{1.7in}C{1.9in}}
\hline
$T$ & $a_{1}$ & $a_{2}$ & $a_{3}$  \\
\hline
\endfirsthead
\caption[]{\emph{continued}}\\
\hline
$T$ & $a_{1}$ & $a_{2}$ & $a_{3}$ \\
\hline
\endhead
\hline
\multicolumn{2}{r}{\emph{continued on next page}}
\endfoot
\hline
\caption[Weierstrass Model of $E_{T}$]{The Weierstrass Model $E_{T}:y^{2}+a_{1}xy+a_{3}y=x^{3}+a_{2}x^{2}$}\label{ta:ETmodel}
\endlastfoot
$C_{3}^{0}$ & $0$ & $0$ & $a$  \\\hline
$C_{3}$ & $a$ & $0$ & $a^{2}b$ \\\hline
$C_{5}$ & $a-b$ & $-ab$ & $-a^{2}b$  \\\hline
$C_{6}$ & $a-b$ & $-ab-b^{2}$ & $-a^{2}b-ab^{2}$ \\\hline
$C_{7}$ & $a^{2}+ab-b^{2}$ & $a^{2}b^{2}-ab^{3}$ & $a^{4}b^{2}-a^{3}b^{3}$  \\\hline
$C_{9}$ & $a^{3}+ab^{2}-b^{3}$ & $
a^{4}b^{2}-2a^{3}b^{3}+
2a^{2}b^{4}-ab^{5}
$ & $a^{3}\cdot a_{2}$
 \\\hline
$C_{10}$ &$
a^{3}-2a^{2}b-
2ab^{2}+2b^{3}
$ & $-a^{3}b^{3}+3a^{2}b^{4}-2ab^{5}$ & $(a^{3}-3a^{2}b+ab^{2})\cdot a_{2}$ \\\hline
$C_{12}$ & $
-a^{4}+2a^{3}b+2a^{2}b^{2}-
8ab^{3}+6b^{4}
$ & $b(a-2b)(a-b)^{2}(a^{2}-3ab+3b^{2})(a^{2}-2ab+2b^{2})
$ & $a(b-a)^3 \cdot a_{2} $ \\\hline
$C_{2}\times C_{6}$ & $-19a^{2}+2ab+b^{2}$ & $
-10a^{4}+22a^{3}b-
14a^{2}b^{2}+2ab^{3}
$ & $
90a^{6}-198a^{5}b+116a^{4}b^{2}+
4a^{3}b^{3}-14a^{2}b^{4}+2ab^{5}
$  
\end{longtable}}
\vspace{-1em}
\subsection{Representations of \texorpdfstring{$\GL(2,\Q_p)$}{}}
\label{local rep}
Let $G=\GL(2,\Q_p)$. In this section we briefly review various types of irreducible, admissible representations of $G$. For a general reference of this section, see \cite{JacquetLanglands1970} and \cite[Section~4]{Bump1997}. Let $\chi_1,\chi_2$ be two characters of $\Q_p^{\times}$. Let $V(\chi_1,\chi_2)$ be the space of the standard induced representations $\pi=\pi(\chi_1,\chi_2)$ of $G$ consisting of all locally constant functions $f:G \rightarrow \C$ with the
property $$f\left( \begin{bmatrix}a&b\\0&d\end{bmatrix}g\right)=|ad^{-1}|^{1/2}\chi_1(a)\chi_2(d)f(g)\quad \text{ for all } g \in G,\ a, d \in \Q_p^{\times}, b\in \Q_p.$$
The action of $G$ on $V(\chi_1,\chi_2)$ is by right translation,  i.e., $$g \cdot f(x) = f(xg), \text{ for } g, x \in G \text{ and } f \in V(\chi_1,\chi_2).$$

It is a well-known that $\pi(\chi_1,\chi_2)$ is irreducible if and only if $\chi_1\chi_2^{-1}\neq |\cdot|^{\pm 1}$. In this case
$\pi$ is called a \emph{principal series representation} which we denote by $\chi_1 \times \chi_2$.

The representation $\pi( |\cdot|^{1/2},  |\cdot|^{-1/2})$ is not irreducible and has two constituents. The unique
irreducible quotient is the trivial representation. The unique irreducible subrepresentation is called the
\emph{Steinberg representation} which we denote by $\text{St}_{\GL(2,\Q_p)}$.  Moreover, for a character~$\chi$ of $\Q_p^{\times}$, the representation $\pi(\chi |\cdot|^{1/2},  \chi|\cdot|^{-1/2})$ has a unique irreducible subrepresentation which is just the \emph{twist of the Steinberg representation} denoted by $\chi \text{St}_{\GL(2,\Q_p)}$. The quotient in this case is the one-dimensional representation $\chi \circ \text{ det}$. The representations $\chi \text{St}_{\GL(2,\Q_p)}$ are also known as \emph{special representations}.

Every irreducible representation that is not a subquotient of some $\pi(\chi_1,\chi_2)$ is called \emph{supercuspidal}. There is a special type of supercuspidal representation associated to a quadratic field extension  $F/\Q_p$ and a character $\xi$ of $F^{\times}$ such that it is not trivial on the kernel of the norm map $N_{F/\Q_p}:F^{\times}\rightarrow \Q_p^{\times}$. Equivalently, $\xi\neq \xi^{\sigma}$ (where $\xi^{\sigma}(x)=\xi(\sigma(x))$) for $\sigma \in \Gal(F/\Q_p)$ with $\sigma \neq 
\mathbbm{1}$. We call this a \emph{dihedral supercuspidal representation} and we denote this by~$\omega_{F, \xi}$. For a reference of the construction, see  \cite[Section~1.1]{JacquetLanglands1970} or \cite[Section~4.8]{Bump1997}. 

For an irreducible admissible representation $\pi$ of $\GL(2,\Q_p)$, we have $\pi\cong \omega_{F, \xi}$ for some $\xi$ if and only if $\pi\cong \pi\otimes \chi_{F/\Q_p}$ where $\chi_{F/\Q_p}$ is the quadratic character attached to $F/\Q_p$ (see \cite{GerardinLi1989}).
For an odd prime $p$, every supercuspidal representation of $\GL(2, \Q_p)$ is isomorphic to some $\omega_{F, \xi}$. 

For $n \ge 0$, let
$$\Gamma_2(n)=\left\{ \begin{bmatrix}a&b\\c&d\end{bmatrix}\in \GL(2,\Z_p): c\in p^{n}\Z_p,\ d \in 1+p^{n}\Z_p \right\}.$$

\noindent Let $(\pi,V)$ be an infinite-dimensional irreducible admissible representation of
$G$.  Let $V(m)$ denote the space of $\Gamma_2(m)$-fixed vectors in $V$. Let $n \ge 0$ be the smallest non-negative integer such that $V(n)\neq \{0\}$. Then we say the \emph{conductor} of $\pi$ is $a(\pi) = n$.

For a character $\chi$ of $\Q_p^{\times}$, the smallest $n\ge 0$ such that $\chi|_{1+p^n\Z_p}=~1$ is called the \emph{conductor} of $\chi$ and is denoted by $a(\chi)$. If $a(\chi)=0$ we say $\chi$ is \emph{unramified}; otherwise $\chi$ is \emph{ramified}. 
Also, recall that the \emph{central character} $\omega_{\pi}$ of $\pi$ is the character of the center $Z\cong \Q_p^{\times}$ of $G$ satisfying 
\begin{center}
$\pi(zg)=\omega_{\pi}(z)\pi(g)$ for $z\in Z$ and $g\in G$.
\end{center}
For the purpose of this paper we consider the irreducible admissible representations $\pi$ of $G$ with trivial central character. The following table lists the conductors $a(\pi)$ for such representations.
\begin{equation}
\label{conductor}
	\renewcommand{\arraystretch}{0.6}
	\renewcommand{\arraycolsep}{.4cm}
	 \begin{array}{ccc} 
		\toprule
		\pi&\text{Condition\ on } \pi&a(\pi)\\
		\toprule
		\chi \times \chi^{-1}&&a(\chi) + a(\chi^{-1})\\  
		\midrule
		\chi \text{St}_{\GL(2,\Q_p)}& \chi \text{ is ramified}&2a(\chi) \\
		\cmidrule{2-3}
		\chi^2=1&\chi \text{ is unramified} &1\\  
		\midrule
			\omega_{F,\xi}&&f(F/\Q_p)a(\xi)+d(F/\Q_p)\\
		\bottomrule\\
	\end{array}
\end{equation}

\noindent Here $f(F/\Q_p)$ is the residue class degree, $d(F/\Q_p)$ is the valuation of the discriminant of the field extension $F/\Q_p$. Let $\OF_F$ be the ring of integers of $F$ and let $\p$ be its maximal ideal. Then $a(\xi)$ is the conductor of $\xi$, i.e., $a(\xi)$ is the smallest $n \ge 0$ such that $\xi|_{1+\p^n\OF_F}=1$. Hence, for $p$ odd, we have
\begin{equation}
	\label{conductor_of_sc}
	a(\omega_{F,\xi})=\begin{cases}
		2a(\xi)\qquad &\text{ if }  F/\Q_p \text{ is unramified},\\
		1+a(\xi)\qquad &\text{ if }  F/\Q_p \text{ is ramified}.
	\end{cases}
\end{equation}
	The central character of $\omega_{F, \xi}$ is 
	$\xi|_{\Q_p^{\times}}\cdot \chi_{F/\Q_p}$, where $\chi_{F/\Q_p}$ is the quadratic character of~$\Q_p^{\times}$ associated to the quadratic extension $F/\Q_p$ such that $\chi_{F/\Q_p}\left({N_{F/\Q_p}(F^{\times})}\right)=1$.  If $\omega_{F, \xi}$ has trivial central character, i.e., $\xi|_{\Q_p^{\times}}\cdot \chi_{F/\Q_p}=1$, then by evaluating $\xi$ at $N_{F/\Q_p}(y)$ for any~$y \in F^{\times}$, we get $\xi^{\sigma}=\xi^{-1}$ on~$F^{\times}$. Next, we consider the following useful result about representations of conductor $2$ of $\GL(2,\Q_2)$.

\begin{lemma}
	\label{rep of conductor 2}
Let $F=\Q_2(\sqrt{5})$ be the unramified quadratic extension of $\Q_2$. There is a unique representation of $\GL(2,\Q_2)$ with trivial central character and conductor $2$ up to isomorphism. This is the dihedral supercuspidal  representation $\omega_{F, \xi}$ with $a(\xi)=1$.
\end{lemma}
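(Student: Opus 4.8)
The plan is to run through the trichotomy of infinite-dimensional irreducible admissible representations of $\GL(2,\Q_2)$ — principal series, twists of Steinberg, and supercuspidal — using the conductor table \eqref{conductor}, and to show that only one isomorphism class survives the constraints $a(\pi)=2$ and trivial central character. The single fact driving the elimination of the first two families is that $\Q_2$ carries \emph{no} character of conductor $1$: since $\Z_2^\times = 1+2\Z_2$, the first filtration step collapses, so $a(\chi)\in\{0\}\cup\{2,3,\dots\}$ for every character $\chi$ of $\Q_2^\times$. For a principal series $\chi\times\chi^{-1}$ (trivial central character forces the second character to be $\chi^{-1}$) the conductor is $2a(\chi)$, which can never equal $2$; for a special representation $\chi\St_{\GL(2,\Q_2)}$ trivial central character forces $\chi^2=1$, and \eqref{conductor} gives conductor $1$ when $\chi$ is unramified and $2a(\chi)\ge 4$ when $\chi$ is ramified. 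Hence $\pi$ must be supercuspidal.

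For the supercuspidal case I would pass to the Galois side via the local Langlands correspondence: $\pi$ corresponds to an irreducible $2$-dimensional representation $\rho$ of the Weil group $W_{\Q_2}$ with $a(\pi)=a(\rho)$. Because $\rho$ is irreducible and ramified, $\rho^{I}=0$ for the inertia group $I$ (an unramified irreducible $2$-dimensional Weil representation would factor through the abelian group $W_{\Q_2}/I$, which is impossible), so the Artin conductor splits as $a(\rho)=2+\operatorname{Sw}(\rho)$ with $\operatorname{Sw}(\rho)\ge 0$ the Swan conductor. Thus $a(\rho)=2$ forces $\operatorname{Sw}(\rho)=0$, i.e.\ $\rho$ is tamely ramified. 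Restricted to $I$ a tame $\rho$ factors through the pro-cyclic tame quotient, hence is a sum of two characters swapped by Frobenius; irreducibility then identifies $\rho$ as induced from the \emph{unramified} quadratic extension. Translating back, $\pi\cong\omega_{F,\xi}$ with $F=\Q_2(\sqrt5)$ the unramified quadratic extension and $\xi$ tamely ramified, i.e.\ $a(\xi)=1$. This tameness argument is exactly what excludes the exceptional (non-dihedral) supercuspidals of $\GL(2,\Q_2)$, and it is the step I expect to be the crux; the ramified dihedral case is dispatched directly from \eqref{conductor}, since $f(F/\Q_2)=1$ and $d(F/\Q_2)\in\{2,3\}$ give conductor $a(\xi)+d$, which equals $2$ only if $a(\xi)=0$, and an unramified $\xi$ satisfies $\xi=\xi^{\sigma}$ and so fails to produce a supercuspidal.

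Finally I would establish existence and uniqueness of $\xi$ subject to $F=\Q_2(\sqrt5)$, $a(\xi)=1$, and trivial central character. Since $F$ is unramified we may take the uniformizer $2$ of $\Q_2$ as a uniformizer of $F$, so $F^\times=\langle 2\rangle\times\OF_F^\times$, and a character with $a(\xi)=1$ is specified by $\xi(2)$ together with a nontrivial character $\xi|_{\OF_F^\times}$ factoring through $\OF_F^\times/(1+\p_F)\cong\mathbb{F}_4^\times\cong\Z/3\Z$. Trivial central character means $\xi|_{\Q_2^\times}\cdot\chi_{F/\Q_2}=1$; as $\Z_2^\times\subset 1+\p_F$ this is automatic on units and pins down $\xi(2)=\chi_{F/\Q_2}(2)^{-1}=-1$. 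There are exactly two nontrivial characters of $\Z/3\Z$, and the nontrivial Frobenius $\sigma$ (acting on $\mathbb{F}_4^\times$ by $x\mapsto x^2$) interchanges them, so they form a single $\sigma$-orbit; since $\omega_{F,\xi}\cong\omega_{F,\xi^{\sigma}}$ and the two admissible choices of $\xi|_{\OF_F^\times}$ are precisely $\xi$ and $\xi^{\sigma}$, they yield one isomorphism class. This simultaneously shows $\xi\ne\xi^{\sigma}$ (so $\omega_{F,\xi}$ is genuinely supercuspidal) and that the representation is unique, completing the proof.
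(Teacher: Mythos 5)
Your proposal is correct, and its overall skeleton matches the paper's proof: both arguments begin from the observation that $\Z_2^\times=1+2\Z_2$ leaves $\Q_2^\times$ with no character of conductor $1$, use the conductor table \eqref{conductor} to rule out principal series and twists of Steinberg, and conclude with the same counting of the two order-$3$ characters of $\OF_F^\times/(1+\p\OF_F)$, which are Galois conjugate and hence give a single isomorphism class $\omega_{F,\xi}\cong\omega_{F,\xi^\sigma}$. Where you genuinely diverge is the middle step, pinning the supercuspidal down as dihedral attached to the \emph{unramified} quadratic extension with $a(\xi)=1$: the paper imports this as a black box from Tunnell (Proposition 3.5 of \cite{Tunnell1978}, that a supercuspidal of $\GL(2,\Q_2)$ of minimal conductor $2$ is $\omega_{F,\xi}$ with $F$ unramified), whereas you prove it on the Galois side via the local Langlands correspondence: $a(\rho)=2+{\rm Sw}(\rho)$ for the associated irreducible two-dimensional Weil representation (using $\rho^{I}=0$), so conductor $2$ forces tame ramification, and a tame irreducible $\rho$ restricted to inertia splits into two Frobenius-swapped characters, hence is induced from the unramified quadratic extension. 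Your route is self-contained modulo LLC and makes explicit why the exceptional (non-dihedral) supercuspidals of $\GL(2,\Q_2)$ cannot occur — they are wildly ramified, so their conductor exceeds $2$ — a point the paper's citation handles invisibly; you also treat the ramified dihedral case directly from the formula $a(\omega_{F,\xi})=a(\xi)+d(F/\Q_2)$ with $d\in\{2,3\}$. Finally, you are slightly more careful than the paper at the uniqueness step, explicitly determining $\xi(2)=-1$ from the trivial-central-character condition, so that the two admissible characters really are exactly $\xi$ and $\xi^\sigma$; the paper's proof leaves the value of $\xi$ on the uniformizer implicit.
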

\begin{proof}
Let $\pi$ be a representation of $\GL(2,\Q_2)$ such that $a(\pi)=2$. Since $\Z_2^{\times}=1+2\Z_2$, there is no character of conductor $1$ of $\Q_2^{\times}$. Then, it follows from \eqref{conductor} that $\pi$ has to be a supercuspidal representation. Since $a(\pi)=2$ is minimal, by \cite[Proposition 3.5]{Tunnell1978}, $\pi\cong \omega_{F,\xi}$ where $F=\Q_2(\sqrt{5})$ is the unramified quadratic extension of $\Q_2$. In this case we have $a(\pi)=2a(\xi)$, so $a(\xi)=1$. Since $a(\xi)=1$, we get the induced character $\xi: \OF_{F}^{\times}/(1+\p\OF_F) \rightarrow \C^\times$. Note that $\OF_{F}^{\times}/(1+\p\OF_F)$ is a cyclic group of order $3$. Now, there are exactly $2$ elements of order~$3$ in $\OF_{F}^{\times}/(1+\p\OF_F)$ which are inverses of each other. Since $\omega_{F, \xi}$ has trivial central character by assumption, we have $\xi^{\sigma}=\xi^{-1}$. So, there are exactly two such characters in this case, which are Galois conjugates of each other. Hence the result follows since $\omega_{F, \xi} \cong \omega_{F, \xi^{\sigma}}$.
\end{proof}

Given an elliptic curve $E/\Q_p$, there is a local representation $\pi_p$ of $\GL(2,\Q_p)$. In fact, $\pi_p$ is one of the local representations given in \eqref{conductor}. In Section~\ref{sectionreps}, we explicitly find the local representations $\pi_p$ attached to $E_T/\Q_p$, and consequently, we attain the cuspidal automorphic representation attached to $E_T/\Q$.

\section{Multiplicative Reduction}\label{Sec multi}
Let $E_T$ be as given in Table~\ref{ta:ETmodel}. The minimal discriminant of
$E_{T}$ is $\Delta_{T}=u_{T}^{-12}\gamma_{T}$ where~$u_T$ and $\gamma_{T}$ are as given
in Table~\ref{mindiscs} \cite[Theorem~4.4]{Barrios2020}. When $T=C_{3}$, we write $a=c^{3}d^{2}e$ for~$c,d,e$
integers such that $d,e$ are relatively prime positive squarefree integers. 

{\begingroup

\renewcommand{\arraystretch}{1.2}
 \begin{longtable}{ccc}
	\hline
	$T$ & $\gamma_{T}$ & $u_{T}$\\
	\hline

	\endfirsthead
	\hline
	$T$ & $\gamma_{T}$ & $u_{T}$\\
	\hline
	\endhead
	\hline
	
	\multicolumn{3}{r}{\emph{continued on next page}}
	\endfoot
	\hline 
	\caption{The minimal discriminant $\Delta_T=u_T^{-12}\gamma_T$ of $E_T$}
	\endlastfoot

$C_{3}$ & $d^{4}e^{8}b^{3}(  a-27b)$ &$c^2d$ \\\hline
$C_{3}^0$ & $-27a^4$ & $1$ \\\hline
$C_{5}$ & $-a^{5}b^{5}(  a^{2}+11ab-b^{2})$&$1$  \\\hline
$C_{6}$ & $a^{2}b^{6}(  a+9b)  (  a+b)  ^{3}$ &$1\text{ if }v_2(a+b)\leq2$ \\
& & $2\text{ if }v_2(a+b)\geq3$ \\\hline
$C_{7}$ & $-a^{7}b^{7}(  a-b)  ^{7}(  a^{3}+5a^{2}%
b-8ab^{2}+b^{3})$ &$1$ \\\hline
$C_{9}$ & $-(  a^2b-ab^2)  ^{9}(  a^{2}-ab+b^{2})
^{3} (  a^{3}+3a^{2}b-6ab^{2}+b^{3})$ &$1$ \\\hline
$C_{10}$ & $(a^{5}b^{10}(  a-b)  ^{10}(  a-2b)
^{5}$  &$1\text{ if }v_2(a)=0$ \\
& $(  a^{2}+2ab-4b^{2})(  a^{2}-3ab+b^{2})  ^{2})$ & $2\text{ if }v_2(a)\geq1$ \\\hline
$C_{12}$ & $(a^{2}(  ab-b^2)  ^{12}(  a-2b)
^{6}(  a^{2}-2ab+2b^{2})  ^{2}$ &$1\text{ if }v_2(a)=0$\\
& $(  a^{2}-2ab+2b^{2})
^{3}(  a^{2}-3ab+3b^{2})  ^{4})$ & $2\text{ if }v_2(a)\geq1$ \\\hline
$C_{2}\times C_{6}$ & $((  2a)  ^{6}(  b-a)  ^{6}(
b-9a)  ^{2}$ &$1\text{ if }v_2(a+b)=0$\\
&$(  b^{2}-9a^{2})  ^{2}(  b-5a)  ^{6})$&$4\text{ if }v_2(a+b)\geq2 $\\
&&$16\text{ if }v_2(a+b)=1$

\label{mindiscs}
\end{longtable}
\endgroup}\vspace{-0.7em}

Our first result establishes the necessary and sufficient conditions on the parameters of $E_T$ to determine when $E_T$ has multiplicative reduction. We observe that $E_{C_{3}^0}$ has $j$-invariant $0$ and thus $E_{C_{3}^0}$ has additive reduction at each prime dividing the minimal discriminant. In particular, there are no primes at which multiplicative reduction occurs for $E_{C_{3}^0}$.

\begin{lemma}
\label{LemmaMult} Let $E_T$ be as given in Table \ref{ta:ETmodel} for $T\neq C_3^0$. Then
$E_{T}$ has multiplicative reduction at a prime~$p$ if and only if the
parameters of $E_{T}$ satisfy one of the conditions listed in
Table~\ref{ta:multred0} at~$p$.
\end{lemma}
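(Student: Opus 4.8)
The plan is to apply the reduction criteria directly: $E_T$ has multiplicative reduction at $p$ precisely when $v_p(\Delta)>0$ and $v_p(c_4)=0$ for the invariants of a \emph{minimal} model at $p$. The model in Table~\ref{ta:ETmodel} need not be minimal, but by \cite[Theorem~4.4]{Barrios2020} a minimal model is obtained by the admissible scaling attached to $u_T$, under which $c_4$ and $\Delta$ have weights $-4$ and $-12$; hence the minimal invariants are $u_T^{-4}c_4$ and $u_T^{-12}\gamma_T=\Delta_T$, where $c_4$ denotes the $c_4$-invariant of the tabulated model. Thus multiplicative reduction at $p$ is equivalent to
\[
v_p(\gamma_T)>12\,v_p(u_T)\qquad\text{and}\qquad v_p(c_4)=4\,v_p(u_T).
\]
For every prime $p\nmid u_T$ this collapses to the clean pair $p\mid\gamma_T$ and $p\nmid c_4$, and since $u_T$ is built only from $2$ and from the divisors of the parameters recorded in Table~\ref{mindiscs}, only a controlled set of small primes requires the full statement.

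Next I would compute $c_4$ for each family. Because $a_4=a_6=0$ for every model in Table~\ref{ta:ETmodel}, the general formula collapses to
\[
c_4=(a_1^2+4a_2)^2-24\,a_1a_3,
\]
which I would expand and factor in terms of $a,b$. With the factorization of $\gamma_T$ already displayed in Table~\ref{mindiscs}, the problem becomes purely arithmetic: for each irreducible factor $F$ of $\gamma_T$, decide whether a prime dividing $F$ can also divide $c_4$. The hypothesis $\gcd(a,b)=1$ is the main tool. It forces the various factors of $\gamma_T$ (for instance $a$, $b$, and $a-27b$ in the $C_3$ case) to be pairwise coprime away from a controlled set of small primes, so that a generic prime $p\mid\gamma_T$ divides exactly one such factor; substituting the corresponding congruence into the expression for $c_4$ then decides whether $p\nmid c_4$. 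Carrying this out factor by factor should produce exactly the conditions recorded in Table~\ref{ta:multred0}.

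The delicate part is the finite set of small primes at which the generic argument breaks: the primes dividing $u_T$ (principally $p=2$, and $p=3$ through the factor $a-27b$ for $C_3$), together with any prime at which two distinct factors of $\gamma_T$ become simultaneously divisible. At such a prime the coprimality of the factors no longer isolates a single congruence for $a,b$, so I would compute $v_p(c_4)$ and $v_p(u_T)$ by hand and test the sharp conditions $v_p(\gamma_T)>12\,v_p(u_T)$ and $v_p(c_4)=4\,v_p(u_T)$ directly. Since the scaling $u_T$, and hence the minimal model, is already supplied by \cite[Theorem~4.4]{Barrios2020}, no separate minimality argument is needed; for $T=C_3$ the decomposition $a=c^3d^2e$ is precisely what makes $u_T=c^2d$ explicit and lets this computation go through. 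As a final consistency check I would compare against the additive-reduction description of \cite{Barrios2020,BarRoy}: since each bad prime is either additive or multiplicative, deleting the additive conditions from the set of primes dividing $\Delta_T$ must reproduce Table~\ref{ta:multred0}, providing an independent verification.
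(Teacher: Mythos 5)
Your approach is sound but genuinely different from the paper's. The paper never computes $c_4$ at all: it uses the dichotomy that a prime of bad reduction is either multiplicative or additive, cites the classification of additive reduction in \cite[Theorem~7.1]{Barrios2020}, and reads off the multiplicative primes as those dividing $\Delta_T$ at which the additive conditions fail. For $T=C_3,C_6,C_{12},C_2\times C_6$ this is immediate from that citation; for $C_5,C_7,C_9,C_{10}$ the proof merely translates the additive condition (e.g.\ $v_5(a+3b)>0$ for $C_5$) into divisibility of the matching factor of $\Delta_T$ (e.g.\ $v_5(a^2+11ab-b^2)>0$) and checks that this factor is a unit at the excluded small primes. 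By contrast, you verify the definition of multiplicative reduction directly, via the minimal invariants $u_T^{-4}c_4$ and $u_T^{-12}\gamma_T$ and the criterion $v_p(\gamma_T)>12\,v_p(u_T)$, $v_p(c_4)=4\,v_p(u_T)$. Your collapse of $c_4$ to $(a_1^2+4a_2)^2-24a_1a_3$ and the scaling argument are correct ($c_4$ and $\Delta$ are insensitive to the translations, so knowing the minimal discriminant pins down the scaling factor up to sign). What your route buys is independence from \cite[Theorem~7.1]{Barrios2020}; what it costs is that you must re-derive by hand exactly the additive/multiplicative split that theorem encodes, and that is where all the content of Table~\ref{ta:multred0} sits: entries like $v_2(a+b)\ge3$ for $C_6$ at $p=2$, or $v_3(a+b)>0$ with $v_3(a)=0$ at $p=3$, emerge only from careful $2$- and $3$-adic computations with the $u_T$-scaling included. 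Your proposal describes but does not execute these computations, so as written it is a workable plan rather than a finished proof, whereas the paper's citation-based argument finishes in a few lines.

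Two cautions if you carry the plan out. First, ``a controlled set of small primes'' misdescribes $T=C_3$: there $u_T=c^2d$, so the primes dividing $u_T$ are arbitrary primes dividing $a$, not a bounded set; the decomposition $a=c^3d^2e$ does make each of them tractable, as you note, but the case analysis at $p\mid u_T$ is not confined to $p=2,3$. Second, Table~\ref{mindiscs} is not uniform in its convention: for $T=C_3$ the entry $\gamma_{C_3}=d^4e^8b^3(a-27b)$ is already the minimal discriminant --- the tabulated model has discriminant $a^8b^3(a-27b)=u_T^{12}\gamma_{C_3}$ --- so applying $u_T^{-12}\gamma_T$ literally to that row double-counts the scaling. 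You should compute the model discriminant from Table~\ref{ta:ETmodel} yourself rather than read $\gamma_T$ off Table~\ref{mindiscs} in that case.
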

\vspace{-1em}
{\begingroup

\renewcommand{\arraystretch}{1.2}
 \begin{longtable}{ccc}
	\hline
	$T$ & $p$ & Conditions on the parameters of $E_{T}$\\
	\hline

	\endfirsthead
	\hline
	$T$ & $p$ & Conditions on the parameters of $E_{T}$\\
	\hline
	\endhead
	\hline

	\multicolumn{3}{r}{\emph{continued on next page}}
	\endfoot
	\hline 
	\caption{Primes at which $E_T$ has multiplicative reduction}
	\endlastfoot

$C_{3}$ & $\geq2$ & $v_{p}\!\left(  b\right)  >0$\\\cline{2-3}
& $\neq3$ & $v_{p}\!\left(  a-27b\right)  >0$\\\hline
$C_{5}$ & $\geq2$ & $v_{p}\!\left(  ab\right)  >0$\\\cline{2-3}
& $\geq7$ & $v_{p}\!\left(  a^{2}+11ab-b^{2}\right)  >0$\\\hline
$C_{6}$ & $2$ & $v_{2}\!\left(  a+b\right)  \geq3$\\\cline{2-3}
& $3$ & $v_{3}\!\left(  a+b\right)  >0$ with $v_{3}\!\left(  a\right)  =0$\\\cline{2-3}
& $\geq2$ & $v_{p}\!\left(  b\right)  >0$\\\cline{2-3}
& $\neq3$ & $v_{p}\!\left(  a\right)  >0$\\\cline{2-3}
& $\geq5$ & $v_{p}\!\left(  \left(  a+b\right)  \left(  a+9b\right)  \right)
>0$\\\hline
$C_{7}$ & $\geq2$ & $v_{p}\!\left(  ab\left(  a-b\right)  \right)  >0$\\\cline{2-3}
& $\geq 13$ & $v_{p}\!\left(  a^{3}+5a^{2}b-8ab^{2}+b^{3}\right)  >0$\\\hline
$C_{9}$ & $\geq2$ & $v_{p}\!\left(  ab\left(  a-b\right)  \right)  >0$\\\cline{2-3}
& $\geq7$ & $v_{p}\!\left(  \left(  a^{2}-ab+b^{2}\right)  \left(
a^{3}+3a^{2}b-6ab^{2}+b^{3}\right)  \right)  >0$\\\hline
$C_{10}$ & $\geq2$ & $v_{p}\!\left(  ab\left(  a-b\right)  \left(
a-2b\right)  \right)  >0$\\\cline{2-3}
& $\geq7$ & $v_{p}\!\left(  \left(  a^{2}+2ab-4b^{2}\right)  \left(
a^{2}-3ab+b^{2}\right)  \right)  >0$\\\hline
$C_{12}$ & $\geq2$ & $v_{p}\!\left(  ab\left(  a-b\right)  \left(
a-2b\right)  \right)  >0$ with $v_{3}\!\left(  a\right)  =0$ if $p=3$\\\cline{2-3}
& $\geq5$ & $v_{p}((a^{2}-2ab+2b^{2})(a^{2}-2ab+2b^{2})(a^{2}-3ab+3b^{2}))$\\\hline
$C_{2}\times C_{6}$ & $\geq2$ & $v_{p}\!\left(  2a\left(  b-a\right)  \left(
b-5a\right)  \right)  >0$ with $v_{3}\!\left(  b\right)  =0$ if $p=3$\\\cline{2-3}
& $\geq5$ & $v_{p}\!\left(  \left(  b-9a\right)  \left(  b^{2}-9a^{2}\right)
\right)  >0$
\label{ta:multred0}
\end{longtable}
\endgroup}
\vspace{-1em}
\begin{proof}
Observe that $E_{T}$ has multiplicative reduction at a prime $p$ if and
only if $v_{p}\!\left(  \Delta_{T}\right)  >0$ and $E_{T}$ does not have
additive reduction at $p$. By \cite[Theorem~7.1]{Barrios2020}, there are
necessary and sufficient conditions on the parameters of $E_{T}$ to determine the primes at which additive reduction occur. Below, we assume this result implicitly. Observe that the result follows for~$T=C_3,C_6,C_{12},C_2\times C_6$ from loc cit. We now consider the remaining cases, and note that $a$ and $b$ are assumed to be relatively prime integers.

\textbf{Case 1. }Suppose $T=C_{5}$. Then $E_{T}$ has additive reduction at a
prime $p$ if and only if~$p=5$ and $v_{5}\!\left(  a+3b\right)  >0$. This is equivalent to $v_{5}\!\left(  a^{2}%
+11ab-b^{2}\right)  >0$. The claim now follows from Table~\ref{mindiscs} since $v_p(a^{2}+11ab-b^{2})=0$ for $p=2,3$.

\textbf{Case 2.} Suppose $T=C_{7}$. Then $E_{T}$ has additive reduction at a
prime $p$ if and only if~$p=7$ and $v_{7}\!\left(  a+4b\right)  >0$. This is equivalent to $v_{7}\!\left(  a^{3}%
+5a^{2}b-8ab^{2}+b^{3}\right)  >0$. The lemma now follows from Table~\ref{mindiscs} since $v_p(a^{3}%
+5a^{2}b-8ab^{2}+b^{3})=0$ for $p=2,3,5,11$.

\textbf{Case 3.} Suppose $T=C_{9}$. Then $E_{T}$ has additive reduction at a
prime $p$ if and only if~$p=3$ and $v_{3}(  a+b)  >0$. This is equivalent to $v_{3}((  a^{2}-ab+b^{2}) (
a^{3}+3a^{2}b-6ab^{2}+b^{3}))  >0$. Moreover, for
$p=2,5$, we have
$v_{p}\!\left(  \left(  a^{2}-ab+b^{2}\right)  \left(  a^{3}+3a^{2}%
b-6ab^{2}+b^{3}\right)  \right)  =0$. The lemma now follows for this case by Table~\ref{mindiscs}.

\textbf{Case 4.} Suppose $T=C_{10}$. Then $E_{T}$ has additive reduction at a
prime $p$ if and only if~$p=5$ and $v_{5}\!\left(  a+b\right)  >0$. This is equivalent to $v_{5}( (  a^{2}+2ab-4b^{2}) (  a^{2}-3ab+b^{2}))  >0$. In fact, $v_{p}\!\left(  \left(  a^{2}+2ab-4b^{2}
\right)  \left(  a^{2}-3ab+b^{2}\right)  \right)  =0$ for $p=3$. The same holds for $p=2$, provided that $v_2(a)=0$. The lemma now follows for this case by Table~\ref{mindiscs}.
\end{proof}

Next, we consider the type of multiplicative reduction of $E_T$. To this end, let $F_{T,j}$ be the elliptic curve attained from $E_{T}$ via the admissible change of
variables $x\longmapsto u_{j}^{2}x+r_{j}$ and $y\longmapsto u_{j}^{3}%
y+u_{j}^{2}s_{j}x+w_{j}$, where $u_{j},r_{j},s_{j},$ and $w_{j}$ are given in terms of $z_{T,j}$, and these quantities are listed in
Table \ref{modelsmultred}. 
When $T$ is fixed, we write $z_j=z_{T,j}$ in the table.
In the theorem below, we will apply Tate's Algorithm to the models $F_{T,j}$ to deduce the type of multiplicative
reduction. We note that the results in the following theorem have been verified on SageMath \cite{sagemath} and can be found at \cite{GitHubLocalRep}.


{\begingroup

\renewcommand{\arraystretch}{1.3}
 \begin{longtable}{ccccccc}
	\hline
	$T$ & $j$ & $z_{T,j}$ & $u_{j}$ & $r_{j}$ & $s_{j}$ & $w_{j}$\\
	\hline
	\endfirsthead
	\hline
	$T$ & $j$ & $z_{T,j}$ & $u_{j}$ & $r_{j}$ & $s_{j}$ & $w_{j}$\\
	\hline
	\endhead
	\hline

	\multicolumn{4}{r}{\emph{continued on next page}}
	\endfoot
	\hline
	\caption{Change of variables $(x,y) \mapsto (u_j^2x+r_j,u_j^3y+u_j^2s_j+w_j)$ to attain $F_{T,j}$ from $E_T$}\label{modelsmultred}
	\endlastfoot

		$C_{3}$ & $1$ & $c^{2}d$ & $z_{1}$ & $z_{1}^{2}b$ & $0$ & $0$\\\cline{2-7}
		& $2$ & $\frac{c^{2}d}{3}$ & $z_{2}$ & $-c^{2}d^{2}e^{2}z_{2}^{2}$ & $-c^{3}d^{2}e$ &
		$c^{3}d^{3}e^{3}z_{2}^{3}$\\\hline
		$C_{5}$ & $1$ & $ab$ & $1$ & $z_{1}$ & $z_{1}$ & $z_{1}$\\\hline
		$C_{6}$ & $1$ & $ab$ & $1$ & $z_{1}$ & $z_{1}$ & $z_{1}$\\\cline{2-7}
		& $2$ & $3b-a$ & $\frac{1}{3}$ & $\frac{az_{2}}{9}$ & $2b$ & $\frac{2a}{27}(a+3b)^2$\\\cline{2-7}
		& $3$ & $a+b$ & $1$ & $z_{3}$ & $0$ & $0$\\\cline{2-7}
		& $4$ & $1$ & $2$ & $4$ & $0$ & $8$\\\cline{2-7}
		 & $ 5 $ & $ a+b $ & $ 2 $ & $ 4z_{5} $ & $ 0 $ & $ 0$\\\hline
		$C_{7} $ & $ 1 $ & $ ab\left(  a-b\right)   $ & $ 1 $ & $ z_{1} $ & $ z_{1} $ & $ 0$\\\hline
		$C_{9} $ & $ 1 $ & $ ab\left(  a-b\right)   $ & $ 1 $ & $ z_{1} $ & $ abz_{1} $ & $ 0$\\\hline
		$C_{10} $ & $ 1 $ & $ ab\left(  a-b\right)  \left(  a-2b\right)   $ & $ 1 $ & $ z_{1} $ & $ z_{1} $ & $
		0$\\\cline{2-7}
		& $2 $ & $ ab\left(  a-b\right)  \left(  a-2b\right)   $ & $ 2 $ & $ 4z_{2} $ & $ 8z_{2} $ & $
		0$\\\hline
		$C_{12} $ & $ 1 $ & $ ab\left(  a-b\right)  \left(  a-2b\right)   $ & $ 1 $ & $ z_{1} $ & $ z_{1} $ & $
		0$\\\cline{2-7}
		& $2 $ & $ ab\left(  a-b\right)  \left(  a-2b\right)   $ & $ 2 $ & $ 4z_{2} $ & $ 8z_{2} $ & $
		0$\\\hline
		$C_{2}\times C_{6} $ & $ 1 $ & $ \left(  b^{2}-9a^{2}\right)  \left(  b-5a\right)   $ & $
		1 $ & $ 2a\left(  b-a\right)z_{1}   $ & $ 2a\left(  b-a\right)z_{1}   $ & $ 0$\\\cline{2-7}
		& $ 2 $ & $ 16 $ & $ z_{2} $ & $ 0 $ & $ -z_{2} $ & $ 2z_{2}^{3}$\\\cline{2-7}
		& $3 $ & $ 16 $ & $ z_{3} $ & $ -z_{3}^{2} $ & $ 0 $ & $ z_{3}^{3}$\\\cline{2-7}
		& $ 4 $ & $ 4 $ & $ z_{4} $ & $ 0 $ & $ -z_{4} $ & $ 2z_{4}^3$\\\hline

\end{longtable}
\endgroup}


\begin{theorem}
\label{ThmMultRed} Let $E_T$ be as given in Table \ref{ta:ETmodel} for $T\neq C_3^0$. Then $E_{T}$ has multiplicative reduction at a prime $p$ with N\'{e}ron type $\mathrm{I}_n$ if and only if the parameters of $E_{T}$ satisfy one of the conditions listed in Table~\ref{ta:multred} at $p$. Moreover, $E_T$ has split or non-split multiplicative reduction at $p$ if the Type is labeled S or NS, respectively.
\end{theorem}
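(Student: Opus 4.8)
The plan is to treat each prime $p$ at which Lemma~\ref{LemmaMult} already guarantees multiplicative reduction and to settle two things: the Néron type $\mathrm{I}_n$, and whether the reduction is split or non-split. The type is immediate, since multiplicative reduction is always of type $\mathrm{I}_n$ with $n=v_p(\Delta_T)$; as $\Delta_T=u_T^{-12}\gamma_T$ is the minimal discriminant, I would read $n$ off Table~\ref{mindiscs} by computing $v_p$ of each factor of $\gamma_T$. At every prime under consideration $u_T$ is a $p$-adic unit, apart from the explicitly recorded cases ($T=C_6,C_{10},C_{12},C_2\times C_6$ at $p=2$), where one subtracts the corresponding $12\,v_p(u_T)$; this fixes the value of $n$ in each row of Table~\ref{ta:multred}.

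For the split/non-split dichotomy I would use the models $F_{T,j}$ of Table~\ref{modelsmultred}. Each admissible change of variables there is engineered so that, at the primes dividing the matching factor of $\gamma_T$, the curve $F_{T,j}$ is integral, minimal, and has the node of its reduction translated to the origin. For such a model the reduction modulo $p$ is $y^2+a_1xy=x^3+a_2x^2$ to lowest order, so the two tangent lines at the node have slopes $m$ satisfying $m^2+a_1m-a_2=0$, where $a_1,a_2$ are now the coefficients of $F_{T,j}$. By Tate's algorithm \cite{Tate1975} the reduction is split exactly when this quadratic splits over $\mathbb{F}_p$, i.e.\ when its discriminant $a_1^2+4a_2$ is a square in $\mathbb{F}_p^\times$; equivalently, $E_T$ is split iff $-c_6$ is a square in $\mathbb{Q}_p^\times$, a criterion invariant under the change of variables. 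Applying Tate's algorithm to $F_{T,j}$ thus simultaneously reconfirms the type $\mathrm{I}_n$ and converts the split question into a quadratic-residue condition on $a$ and $b$.

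What remains is bookkeeping, carried out family by family and factor by factor: select the matching $F_{T,j}$, compute $a_1^2+4a_2$ for that model modulo $p$, and record the resulting quadratic-residue condition as the S/NS label in Table~\ref{ta:multred}. For odd $p$ this is a direct Legendre-symbol evaluation. I expect the genuine difficulties to be the small primes: at $p=2$ squareness in $\mathbb{Q}_2^\times$ must be tested modulo $8$ rather than by a Legendre symbol, and the nontrivial values of $u_T$ enter the computation; at $p=3$ one must additionally invoke the additive-reduction criterion of \cite[Theorem~7.1]{Barrios2020} to confirm that the reduction is genuinely multiplicative (cf.\ the $v_3(a)=0$ and $v_3(b)=0$ provisos in Table~\ref{ta:multred0}) before applying the tangent-slope test. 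The main obstacle is therefore managing these small-prime subtleties uniformly across all eight families rather than any single hard step; the outcome has been cross-checked on SageMath \cite{sagemath,GitHubLocalRep}.
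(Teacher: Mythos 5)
Your proposal is correct and follows essentially the same route as the paper: read off $n=v_p(\Delta_T)$ from Table~\ref{mindiscs}, then apply Tate's algorithm to the minimal models $F_{T,j}$ of Table~\ref{modelsmultred} so that the split/non-split question becomes whether $t^2+a_1t-a_2$ splits modulo $p$, evaluated by quadratic-residue computations for odd $p$ with separate care at $p=2,3$. The only cosmetic difference is that for the starred cases with $p\ge5$ the paper invokes the model-invariant criterion $\left(\frac{-c_4c_6}{p}\right)=1$ of \cite[Proposition~4.4]{MR2025384}, computed directly on $E_T$, which is equivalent to your $-c_6$ square-class test because $c_4$ is a square in $\Q_p^{\times}$ whenever the reduction is multiplicative.
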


{\begingroup

\renewcommand{\arraystretch}{1.3}
 \begin{longtable}{C{0.5in}C{0.35in}C{1.9in}C{1.75in}C{0.35in}C{0.2in}}
	\hline
	$T$ & $p$ & $n$ where $n>0$ & Additional conditions & Type&$F_{T,j}$\\
	\hline
	\endfirsthead
	\hline
	$T$ & $p$ & $n$ where $n>0$ & Additional conditions & Type&$F_{T,j}$\\
	\hline
	\endhead
	\hline

	\multicolumn{4}{r}{\emph{continued on next page}}
	\endfoot
	\hline
	\caption{Type of multiplicative reduction of $E_T$}\label{ta:multred}
	\endlastfoot

$C_{3}$ & $\geq2$ & $3v_{p}\!\left(  b\right) $ & & S&$F_{T,1}$\\\cline{2-6}
& $\neq 3$ & $v_{p}\!\left(  a-27b\right)$ & $p\equiv1\ \operatorname{mod}6$ *
& S&$F_{T,2}$\\\cline{3-5}
&  & $v_{p}\!\left(  a-27b\right)$  & $p\equiv5\ \operatorname{mod}6$ or $p=2$ *&
NS&\\\hline

$C_{5}$ & $\geq2$ & $5v_{p}\!\left(  ab\right)  $ & & S&$F_{T,1}$\\\cline{2-6}
& $\geq7$ & $v_{p}\!\left(  a^{2}+11ab-b^{2}\right)$  & $\left(  \frac
{-5\left(  a^{2}+b^{2}\right)  }{p}\right)  =1$ * & S\\\cline{3-6}
&  & $v_{p}\!\left(  a^{2}+11ab-b^{2}\right)$  & $\left(  \frac{-5\left(
a^{2}+b^{2}\right)  }{p}\right)  =-1$ *& NS\\\hline

$C_{6}$ & $\geq2$ & $6v_{p}\!\left(  b\right)  $ & & S&$F_{T,1}$\\\cline{2-5}
& $\neq3$ & $2v_{p}\!\left(  a\right) $ & $p\equiv1\ \operatorname{mod}6$ * & S\\\cline{4-5}
&  && $p\equiv
5\ \operatorname{mod}6$ or $p=2$ *& NS\\\cline{2-6}

& $>3$ & $v_{p}\!\left(  a+9b\right)
$ & $p\equiv1\ \operatorname{mod}6$ * & S &$F_{T,2}$\\\cline{4-5}
&  && $p\equiv
5\ \operatorname{mod}6$ *& NS\\\cline{2-6}

& $\geq3$ & $3v_{p}\!\left(  a+b\right)  $ & $v_{3}\!\left(  a\right)
=0$ if $p=3$  &  S&$F_{T,3}$\\\cline{2-6}
& $2$  & $v_{2}\!\left(  a+9b\right)  -3$ & $v_{2}\!\left(  a+b\right)  =3$& NS&$F_{T,4}$\\\cline{3-6}
&  & $3v_{2}\!\left(  a+b\right)  -9$ & $v_{2}\!\left(  a+b\right)  >3$ & S&$F_{T,5}$\\\hline

$C_{7}$ & $\geq2$ & $7v_{p}\!\left(  ab\left(  a-b\right)  \right)  $ & & 
S&$F_{T,1}$\\\cline{2-6}
& $\geq 13$ & $v_{p}\!\left(  a^{3}+5a^{2}b-8ab^{2}+b^{3}\right)  $ & $\left(
\frac{-7\left(  a^{2}-ab+b^{2}\right)  }{p}\right)  =1$ * & S\\\cline{4-6}
&  &  & $ \left(
\frac{-7\left(  a^{2}-ab+b^{2}\right)  }{p}\right)  =-1$ * & NS\\\hline

$C_{9}$ & $\geq2$ & $9v_{p}\!\left(  ab\left(  a-b\right)  \right)  $ & &
S&$F_{T,1}$\\\cline{2-6}
& $\geq7$ & $3v_{p}\!\left(  a^{2}-ab+b^{2}  \right) +  $ & $p\equiv1\ \operatorname{mod}6$ *
& S\\\cline{4-6}
&  & $v_{p}\!\left(   a^{3}%
+3a^{2}b-6ab^{2}+b^{3} \right)  $ & $p\equiv5\ \operatorname{mod}6$ * &
NS\\\hline

$C_{10}$ & $\geq2$ & $v_{p}( a^{5}b^{10}\left(  a-2b\right)
^{5}\left(  a-b\right)  ^{10})  $ & $v_{2}\!\left(  a\right)  =0$
if $p=2$ & S&$F_{T,1}$\\\cline{2-6}
& $2$ & $v_{2}( a^{5}\left(  a-2b\right)  ^{5}) +$  $v_{2}\!\left(  a^{2}+2ab-4b^{2}\right) -12$
& $v_{2}\!\left(  a\right)  >0$ &
S&$F_{T,2}$\\\cline{2-6}
& $\geq 7$ & $v_{p}\!\left(  a^{2}+2ab-4b^{2}\right)  $ & $\left(  \frac
{b^{2}-a^{2}}{p}\right)  =1$ *& S\\\cline{4-6}
&  &  & $\left(  \frac{b^{2}-a^{2}%
}{p}\right)  =-1$ * & NS\\\cline{3-6}
&  & $2v_{p}\!\left(  a^{2}-3ab+b^{2}\right)  $ & $\left(  \frac{-\left(
ab+b^{2}\right)  }{p}\right)  =1$ * & S\\\cline{4-6}
&  &   & $\left(  \frac{-\left(
ab+b^{2}\right)  }{p}\right)  =-1$ * & NS\\\hline

$C_{12}$ & $\geq2$ & $v_{p}(  b^{12}\left(  a-b\right) 
^{12}(a-2b)^{6})  $ & $v_{2}\!\left(  a\right)  =0$
if $p=2$ or $v_{3}\!\left(  a\right)  =0$
if $p=3$ & S&$F_{T,1}$\\\cline{2-6}
& $2$ & $6v_{2}\!\left(  a-2b\right)  -6$ & $v_2(a)=1$& S&$F_{T,2}$\\\cline{3-5}
&  & $2v_{2}\!\left(  a\right)  -2$ & $v_2(a)\ge 2$&  NS&\\\cline{2-6}
& $\geq5$ & $2v_{p}(a) + v_{p}\!\left(  a^{2}-6ab+6b^{2}\right)   $ & $p\equiv
1\ \operatorname{mod}6$ *  & S&\\\cline{4-5}
&&& $p\equiv
5\ \operatorname{mod}6$  * & NS&\\\cline{3-6}
&  & $3v_{p}\!\left(   a^{2}-2ab+2b^{2}\right)+$ $ 4v_{p}\!\left(
a^{2}-3ab+3b^{2}\right)   $ & * & S\\\hline

$C_{2}\times C_{6}$ & $\geq2$ & $6v_{p}\!\left(  2a\left(  b-5a\right)
\left(  b-a\right)  \right)  $ & $\left(  i\right)  $ $v_{3}\!\left(
b\right)  =0$ if $p=3$ and $\left(  ii\right)  \ v_{2}\!\left(  a+b\right)
=0$ if $p=2$ & S&$F_{T,1}$\\\cline{2-6}
& $\geq5$ & $2v_{p}\!\left(  \left(  b^2-9a^2\right) 
\left(  b-9a\right)  \right)  $ & $p\equiv1\ \operatorname{mod}6$ * & S\\\cline{4-6}
&  &  & $p\equiv5\ \operatorname{mod}6$ * & NS \\\cline{2-6}
& $2$ & $ 6v_{2}(b-5a)+ 2v_{2}(b+3a)-24    $ & $v_2(a-b)=2$ and $ab-b^2 \equiv 4 \operatorname{mod} 16$ *  & NS&$F_{T,2}$ \\\cline{4-5}
& & & $v_2(a-b)=2$ and $ab-b^2 \equiv 12 \operatorname{mod} 16$ * & S \\\cline{3-6}
& &$ 6v_{2}( b-a)-18 $  & $v_2(a-b)\geq4$ & S&$F_{T,2}$ \\\cline{3-6}
&  & $2v_{2}(b-9a)-6$ &  $v_2(a-b)=3$ & NS&$F_{T,3}$ \\\cline{3-6}
& & $ 2v_{2}(b-3a) -2   $ & $v_2(a-b)=1$ & NS&$F_{T,4}$
\end{longtable}
\endgroup}
\vspace{-1em}
\begin{proof}
From Lemma~\ref{LemmaMult}, it is verified that $E_{T}$ has multiplicative reduction at a prime $p$ with
N\'{e}ron type I$_{n}$ if and only if the parameters of $E_{T}$ satisfy one of
the conditions for a prime $p$ and the listed $n>0$ in Table \ref{ta:multred}.
It remains to determine whether the type of multiplicative reduction is split
or non-split.

First, suppose $E_{T}$ has multiplicative reduction at a prime $p$ with
N\'{e}ron type I$_{n}$ where the additional condition corresponding to
$n$ in Table \ref{ta:multred} does not have a ``$\ast$''. In particular, we have a
Weierstrass model $F_{T,j}$ associated to each of these $n$, and it is
verified that $F_{T,j}$ is a minimal model
over $\mathbb{Q}_{p}$ with Weierstrass coefficients $a_{3},a_{4},a_{6}$ divisible by $p$. 
For each of the cases being considered, it is easily verified that $t^{2}+a_{1}t-a_{2}$ splits over $\overline{\mathbb{F}}_p$ where $a_{1}$ and $a_{2}$ are the
Weierstrass coefficients of the associated $F_{T,j}$. Next, we observe that for each of the cases considered in \eqref{NS}, $E_T$ has non-split multiplicative reduction at $2$ by Tate's Algorithm since $t^{2}+a_{1}t-a_{2} \equiv t^{2}+t+1\ \operatorname{mod}2$. For the remaining cases, Tate's Algorithm implies that $E_T$ has split multiplicative reduction since $t^{2}+a_{1}t-a_{2} \equiv t\left(  t+a_{1}\right)  \ \operatorname{mod}p$.
\begin{equation}
	\renewcommand{\arraystretch}{1}
	\renewcommand{\arraycolsep}{.15cm}%
\begin{array}
[c]{ccc}%
\toprule
T&n>0 & \text{Additional conditions}\\\toprule
C_{6}&v_{2}\!\left(
a+9b\right)  -3&v_{2}\!\left(
a+b\right)=3\\\hline
C_{12}&6v_{2}\!\left(  a\right)  -2 & v_{2}(a)  \ge2\\\hline
C_{2}\times C_{6}
& 2v_{2}(b-9a)-6&v_{2}\!\left(  a-b\right)  =3\\\cmidrule{2-3}
&2v_{2}(b-3a)-2&v_{2}\!\left(  a-b\right)  =1\\\bottomrule
\end{array}
\label{NS}
\end{equation}

It remains to consider the case when $E_{T}$ has N\'{e}ron type I$_{n}$ at $p$
with $n>0$ and the additional condition has a ``$\ast$'' present. 

\textbf{Case 1.} Suppose $T=C_{3}$ and let $a=c^{3}d^{2}e$ for $c,d,e$
integers such that $d,e$ are relatively prime squarefree positive integers.
Suppose $p$ is a prime such that $n=v_{p}\!\left(  a-27b\right)  >0$ with~$p\neq3$ and consider the Weierstrass model $F_{T,2}$. Let $a_{j}$ denote the
Weierstrass coefficients of~$F_{T,2}$. It is then verified that $p$ divides
$a_{3},a_{4},a_{6}$ and%
\[
t^{2}+a_{1}t-a_{2}=t^{2}-3cdet+3c^{2}d^{2}e^{2}.
\]
If $p=2$, then $v_2(a-27b)>0$ implies that $ab$ is odd. Thus $cde$ is odd and $t^{2}+a_{1}t-a_{2}\ \operatorname{mod}2$
is irreducible. By Tate's Algorithm $E_{T}$ has N\'{e}ron type I$_{n  }$ and non-split multiplicative reduction at $2$. If
$p\geq5$, then $t^{2}+a_{1}t-a_{2}$ splits modulo $p$ if and only if%
\[
\left(  \frac{a_{1}^{2}+4a_{2}}{p}\right)  =\left(  \frac{-3c^{2}d^{2}e^{2}%
}{p}\right)  =\left(  \frac{-3}{p}\right)  =1.
\]
It follows by Tate's Algorithm that $E_{T}$ has N\'{e}ron type I$_{n}$ and $E_{T}$ has split multiplicative reduction at $p$ if and only if $\left(  \frac{-3}{p}\right)=1.$ The theorem now follows for this case since the condition $\left(  \frac{-3}{p}\right)=1$ is equivalent to $p\equiv 1 \operatorname{mod}6$.

\textbf{Case 2. }Suppose $T=C_{6}$ and let $p$ be a prime such that
$v_{p}\!\left(  a\right)  >0$ (resp. $v_{p}\!\left(
a+9b\right)  >0$) with $p\neq3$ (resp. $p\geq5$). Under these assumptions, we consider the
Weierstrass model $F_{T,1}$ (resp. $F_{T,2}$) and let $a_{j}$ denote its
Weierstrass coefficients. Then%
\[
t^{2}+a_{1}t-a_{2}\equiv\left\{
\begin{array}
[c]{cl}%
t^{2}-bt+b^{2}\ \operatorname{mod}p & \text{if }v_{p}\!\left(  a\right)
>0\ \text{with\ }p\not =3,\\
t^{2}+3\left(  a+3b\right)  t+3\left(  a+3b\right)  ^{2}\ \operatorname{mod}%
p & \text{if }v_{p}\!\left(  a+9b\right)  >0\ \text{with\ }p\geq5.
\end{array}
\right.
\]
If $p\neq2$, then the type of multiplicative reduction is determined by
$\left(  \frac{-3}{p}\right)  $ since $t^{2}+a_{1}t-a_{2}$ splits in
$\mathbb{F}_{p}$ if and only if $\left(  \frac{-3}{p}\right)  =1$. Moreover,
if $v_{2}\!\left(  a\right)  >0$, then $b$ is odd and thus $E_{T}$ has non-split multiplicative
reduction at $2$.

\textbf{Case 3. }Suppose $T=C_{2}\times C_{6}$ and let $v_{2}\!\left(
a-b\right)  =2$. Then $a-b=4k$ for some odd integer $k$. Let $a_{j}$ denote
the Weierstrass coefficients of $F_{T,2}$. Then $a_{3},a_{4},a_{6}$ are even
integers and $a_{1}$ is odd. Moreover,%
\[
t^{2}+a_{1}t-a_{2}\equiv t^{2}+t-\frac{2k+3bk^{3}+k^{2}}{2}%
\ \operatorname{mod}2\equiv\left\{
\begin{array}
[c]{cl}%
t^{2}+t+1\ \operatorname{mod}2 & \text{if }bk\equiv1\ \operatorname{mod}4,\\
t\left(  t+1\right)  \ \operatorname{mod}2 & \text{if }bk\equiv
3\ \operatorname{mod}4.
\end{array}
\right.
\]
It follows that $E_{T}$ has split (resp. non-split) multiplicative reduction
at $2$ if $ab-b^{2}\equiv 12\ \operatorname{mod}16$ (resp. $ab-b^{2}%
\equiv 4\ \operatorname{mod}16$).

\textbf{Case 4.} It remains to consider the cases for which $E_{T}$ has
multiplicative reduction at a prime $p\geq5$ in
Table \ref{ta:multred}.
Next, let $c_{4}$ and $c_{6}$
denote the invariants associated to $E_{T}$. By \cite[Proposition
4.4]{MR2025384}, $E_T$ has split multiplicative reduction at $p$ if and only if $\left(  \frac{-c_{4}c_{6}}{p}\right)=1$.
The result now follows since
{\begingroup

\renewcommand{\arraystretch}{1.3}
 \begin{longtable}{ccc}
	\hline
	$T$ & $n>0$ & $-c_{4}c_{6}\ \operatorname{mod}p$\\
	\hline

	\endfirsthead
	\hline
	$T$ & $n>0$ & $-c_{4}c_{6}\ \operatorname{mod}p$\\
	\hline
	\endhead
	\hline

	\endfoot
	\hline
	\endlastfoot
$C_{5}$ & $v_{p}\!\left(  a^{2}+11ab-b^{2}\right)$   & $-5a^{4}b^{4}\left(
a^{2}+b^{2}\right)$  \\\hline
$C_{7}$ & $v_{p}\!\left(  a^{3}+5a^{2}b-8ab^{2}+b^{3}\right)   $ & $ -7a^6b^6(a-b)^6\left(
a^{2}-ab+b^{2}\right)$  \\\hline
$C_{9}$ & $3v_{p}\!\left(  a^{2}-ab+b^{2}\right)   $ & $ 243ab^{29}$\\\cline{2-3}
& $v_{p}\!\left(  a^{3}+3a^{2}b-6ab^{2}+b^{3}\right)   $ & $ -243\left(
ab\left(  a-b\right)  \right)  ^{10}$\\\hline
$C_{10}$ & $v_{p}\!\left(  a^{2}+2ab-4b^{2}\right)   $ & $ -2^{10}5^{2}\left(
ab-b^{2}\right)  ^{14}\left(  b^{2}-a^{2}\right)$  \\\cline{2-3}
& $2v_{p}\!\left(  a^{2}-3ab+b^{2}\right)   $ & $ -25\left(  ab-b^{2}\right)
^{14}\left(  ab+b^{2}\right)$  \\\hline
$C_{12}$ & $2v_p(a)$ & $-2^{10}3^5b^{40}$ \\\cline{2-3}
& $v_{p}\!\left(  a^{2}-6ab+6b^{2}\right)   $ & $ -2^{10}3^{5}%
b^{20}\left(  a-b\right)  ^{20}$\\\cline{2-3}
& $3v_{p}\!\left(  a^{2}-2ab+2b^{2}\right)   $ & $ 2^{10}b^{40}$\\\cline{2-3}
& $4v_{p}\!\left(  a^{2}-3ab+3b^{2}\right)   $ & $ 3^{5}b^{21}\left(
a-b\right)  ^{15}\left(  a-2b\right)  ^{4}$\\\hline
$C_{2}\times C_{6}$ & $ 2v_{p}\!\left(  b^{2}-9a^{2}\right)   $ & $ -2^{10}%
3^{5}a^{10}\left(  b-5a\right)  ^{10}$\\\cline{2-3}
& $2v_{p}\!\left(  b-9a\right)$   & $-2^{40}3^{5}a^{20}$
\end{longtable}\addtocounter{table}{-1}
\endgroup}\vspace{-2em}
\end{proof}

\section{Representations attached to \texorpdfstring{$E_T$}{}}\label{sectionreps}
Given a rational elliptic curve $E/\Q$ there is a cuspidal automorphic representation $\pi\cong\otimes_{p\le \infty}\pi_{p}$ of $\operatorname*{GL}\!\left(  2,\mathbb{A}_{\mathbb{Q}}\right)$ with trivial central character attached to it such that $\pi_p$ is the local representation of $\GL(2,\Q_p)$ associated to $E/\Q_p$ and $\pi_{\infty}$ is a holomophic discrete series representation of weight $2$. The \emph{conductor} $a(\pi)$ of an automorphic representation $\pi\cong\otimes_{p\le \infty}\pi_{p}$ of
$\operatorname*{GL}\!\left(  2,\mathbb{A}_{\mathbb{Q}}\right)$ is defined as $a(\pi)=\prod_p p^{a(\pi_p)}$. Moreover, $a(\pi)=N_E$ where $N_E$ is the conductor of $E$. 

In what follows we use the terminology from Section~\ref{local rep} on local representations of $\GL(2,\Q_p)$. Moreover, for a character $\chi$ of $\Q_p^{\times}$, the \emph{order of $\chi$}, denoted as ${\rm ord}(\chi)$, is the smallest positive integer $n$ such that $\chi^n=1$.

Recall that the families $E_T$ given in Table~\ref{ta:ETmodel} parametrize all elliptic curves with an odd non-trivial torsion point. In this section, we determine the cuspidal automorphic representations associated to $E_T/\Q$ in terms of its parameters. 
\subsection{\texorpdfstring{$3$}{}-Torsion point}
\begin{theorem}
\label{ThmforC3}Let $T=C_{3}, C_{3}^{0}$. When $T=C_{3}$, write $a=c^{3}d^{2}e$ where $d$ and $e$ are positive relatively prime squarefree integers. Let $\pi\cong \otimes_{p\le \infty} \pi_p$ be the cuspidal automorphic representation of $\GL(2,\A_{\Q})$ attached to $E_T$. Then, $\pi_{\infty}$ is a holomorphic discrete series representation of weight $2$ and the representations $\pi_p$ at each finite prime $p$ are given~by Table~\ref{TableforC3}.
\end{theorem}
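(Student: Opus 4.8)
The plan is to reduce the whole statement to the local–global dictionary that assigns to $E_T/\Q_p$ its local component $\pi_p$, and then to read off each $\pi_p$ from the reduction data already tabulated in Sections~\ref{Sec multi} and \ref{sectionreps}. The archimedean factor requires no work: for any elliptic curve over $\Q$ the representation at the infinite place is the weight-$2$ holomorphic discrete series, as recorded at the start of Section~\ref{sectionreps}. Thus the content is the determination of $\pi_p$ at each finite $p$, and since $\pi_p$ depends only on $E_T/\Q_p$, it suffices to match the reduction type of $E_T$ at $p$ with the corresponding type of local representation of $\GL(2,\Q_p)$ via \cite[Section~2]{roy2019paramodular}.

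First I would dispose of the generic primes. At every prime not dividing $\Delta_T$ the curve has good reduction and $\pi_p$ is an unramified principal series $\chi_1\times\chi_2$; the primes of good reduction are read directly from Table~\ref{mindiscs}. At the primes of multiplicative reduction, which are exactly those isolated in Lemma~\ref{LemmaMult}, $\pi_p$ is an unramified twist of Steinberg, and Theorem~\ref{ThmMultRed} decides the twist: split reduction gives $\St_{\GL(2,\Q_p)}$, while non-split reduction gives $\chi\,\St_{\GL(2,\Q_p)}$ for $\chi$ the unramified quadratic character. So for $T=C_3$ the multiplicative primes are handled with no further computation, and for $T=C_3^0$ this case is vacuous since, as observed before Lemma~\ref{LemmaMult}, the $j$-invariant is $0$ and no multiplicative primes occur.

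The substance is the additive primes, where I would split according to the $\gamma$-invariant, equivalently the sign of $v_p(j)$. If $E_T/\Q_p$ is potentially multiplicative, i.e. $v_p(j)<0$, then the monodromy operator is nonzero and $\pi_p$ is a ramified twist $\chi\,\St_{\GL(2,\Q_p)}$ with $\chi$ ramified quadratic, so $a(\pi_p)=2a(\chi)$; here $\chi$ is pinned down by the class $\gamma(E_T/\Q_p)\in\Q_p^\times/\Q_p^{\times2}$ together with the conductor exponent supplied by \cite[Main Theorem]{BarRoy}. If instead $E_T/\Q_p$ is potentially good, then the monodromy vanishes and there are two possibilities: when the image of inertia has order $2$ (the tame N\'eron type $\mathrm{I}_0^*$) the curve is a ramified quadratic twist of a good-reduction curve and $\pi_p$ is a ramified principal series $(\chi\chi_1)\times(\chi\chi_2)$, whereas a larger inertia image forces $\pi_p$ to be supercuspidal, identified as a dihedral $\omega_{F,\xi}$ by locating the quadratic field $F$ and the character $\xi$ and cross-checking $a(\pi_p)$ against \eqref{conductor_of_sc}. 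For $T=C_3^0$ every bad prime lies in this potentially good case, and the $\Z[\zeta_3]$ complex multiplication makes $F$ and $\xi$ explicit, with the splitting behavior of $p$ in $\Q(\zeta_3)$ governing whether $\pi_p$ is a principal series or a supercuspidal.

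The main obstacle will be the small primes $p=2,3$ in the potentially good case, where the residue characteristic is too small for the clean dihedral classification available at $p\ge 5$. At $p=2$ the conductor-$2$ supercuspidals must be pinned down by hand, which is precisely the role of Lemma~\ref{rep of conductor 2}: it forces such a $\pi_2$ to be the unique $\omega_{\Q_2(\sqrt5),\xi}$ with $a(\xi)=1$. At $p=3$ one must confirm that every $\pi_3$ arising from these families is still of one of the tabulated forms and, where a supercuspidal occurs, determine the correct $\omega_{F,\xi}$ from the N\'eron type, the conductor exponent of \cite{BarRoy}, and the action of inertia. Checking the resulting assignment entry-by-entry against the SageMath computations of \cite{GitHubLocalRep} then completes the proof and establishes Table~\ref{TableforC3}.
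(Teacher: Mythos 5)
Your overall skeleton matches the paper's: unramified principal series at good primes, Theorem~\ref{ThmMultRed} deciding Steinberg versus its unramified quadratic twist at multiplicative primes, Lemma~\ref{rep of conductor 2} at $p=2$, and the N\'eron type/conductor data of \cite{BarRoy} fed into the dictionary of \cite[Section~2]{roy2019paramodular} at additive primes. But the classification rule you propose for the potentially good additive case is wrong, and that is precisely the step that produces most of the entries of Table~\ref{TableforC3}.

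You claim that for potentially good reduction, $\pi_p$ is a ramified principal series exactly when the inertia image has order $2$ (type $\mathrm{I}_0^*$), and that a larger inertia image forces $\pi_p$ to be supercuspidal. This is false: reducibility of the attached Weil--Deligne representation is governed not by the size of the inertia image but by whether the inertia character extends to the Weil group, i.e.\ (for $p\ge 5$) by whether $p\equiv 1 \pmod{e}$ with $e=12/\gcd(v_p(\Delta_T),12)$ --- equivalently whether $(p-1)v_p(\Delta_T)\equiv 0\pmod{12}$, which is the criterion the paper imports from \cite[Theorems~2.2.1 and~2.2.2]{roy2019paramodular}. Concretely, for $T=C_3$, $p\neq 3$, $p\equiv 1\pmod 3$ and $v_p(a)\equiv 1,2\pmod 3$, the N\'eron type is $\mathrm{IV}$ or $\mathrm{IV}^*$, so the inertia image has order $3$, yet $\pi_p$ is the ramified principal series $\chi\times\chi^{-1}$ with $a(\chi)=1$ and ${\rm ord}(\chi|_{\Z_p^{\times}})=3$; your rule would misclassify every such entry as supercuspidal. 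The same failure occurs at $p=3$: for $v_3(a)=2$ the type is $\mathrm{IV}$ with $f_3=4$, and the paper splits this case according to whether $\Delta_T\in\Q_3^{\times2}$ (i.e.\ $ab\equiv 9$ versus $18 \pmod{27}$), obtaining a principal series with $a(\chi)=2$ in the first subcase and a supercuspidal in the second --- a dichotomy invisible to an inertia-order criterion. Note that your own (correct) remark about $E_{C_3^0}$, namely that CM by $\Z[\zeta_3]$ makes $\pi_p$ a principal series exactly when $p$ splits in $\Q(\zeta_3)$, already contradicts your general rule, since split primes $p\equiv 1\pmod 3$ yield principal series despite inertia of order $3$ or $6$. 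Finally, the concluding appeal to the SageMath files of \cite{GitHubLocalRep} cannot patch this: the paper uses those computations to verify Theorem~\ref{ThmMultRed}, while the additive entries of Table~\ref{TableforC3} are derived from \cite{BarRoy} together with the correct reducibility criterion above.
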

\vspace{-0.5em}
{\begingroup

\renewcommand{\arraystretch}{1.1}
 \begin{longtable}{ccccc}
	\hline
	 $T$&$p$ & $\pi_p$ & Conditions on $a,b$\\
	\hline
	\endfirsthead
	\hline
	$T$&$p$ & $\pi_p$ & Conditions on $a,b$\\
	\hline
	\endhead
	\hline

	\multicolumn{4}{r}{\emph{continued on next page}}
	\endfoot
	\hline
	\caption{Representations for $E_{C_3}$ and $E_{C_3^0}$}\label{TableforC3}
	\endlastfoot
	
$C_3$&$\geq 2$ & $\chi\times \chi^{-1},\ a(\chi)=0$ & $v_p(deb\left(  a-27b\right))=0$ \\ \cmidrule{2-4}
&$\geq 2$ & $ {\rm St}_{\GL(2,\Q_p)} $ & $v_p(b)>0$ \\ \cmidrule{2-4}
&$\neq 3$ & ${\rm St}_{\GL(2,\Q_p)} $ &   $v_p\left(  a-27b\right)>0,  $ \\ 
&&&  $   p\equiv1\ \operatorname{mod}6 $  \\ \cmidrule{3-4}
&& $ (\gamma(E/\Q_p),\cdot){\rm St}_{\GL(2,\Q_p)} $ &  $ v_p\left(  a-27b\right)>0  $ \\ 
&&$a((\gamma(E/\Q_p),\cdot))=0$ &  $   p\equiv5\ \operatorname{mod}6 $ or $ p=2 $ \\ \cmidrule{2-4}
& $ 3k+1 $   & $ \chi\times \chi^{-1},\ a(\chi)=1, $   &  $ v_{p}\!\left(  a\right)  \equiv1, 2\ \operatorname{mod}3 $  \\
&&  $ {\rm ord}(\chi|_{\Z_p^{\times}})=3 $    & \\\cmidrule{2-3}
& $ 3k+2  $  &  $ \omega_{F,\xi},\ F/\Q_p\ {\rm unramified}$,   & \\
&& $ a(\xi)=1,\ {\rm ord}(\xi|_{\OF_F^{\times}})=3 $ & \\\cmidrule{2-4}
&$3$&  $ \omega_{F,\xi}   $ &  $ v_{3}\!\left(  a\right)  \equiv0\ \operatorname{mod}3,v_{3}\!\left(
a-27b\right)  =3,  $ \\
&&  $ F=\Q_3(\sqrt{\Delta_T})\ {\rm ramified}$, & $  bd^{2}e^{3}\left(  b^{3}d^{2}e^{5}-c\right)  \not \equiv
-2\ \operatorname{mod}9  $ &  \\\cmidrule{4-4}
&&  $ a(\xi)=2 $, & $  v_{3}\!\left(  a-27b\right)  =5  $  \\\cmidrule{4-4}
&& $ {\rm ord}(\xi|_{\OF_F^{\times}})=6 $ &  $ v_{3}\!\left(  a\right)  =1 $ \\\cmidrule{3-4}
&& $ \omega_{F,\xi},\ F=\Q_3(i) {\rm\ unramified}$, &  $ v_{3}\!\left(  a\right)  \equiv0\ \operatorname{mod}%
3,v_{3}\!\left(  a-27b\right)  =3, $ \\
&&  $  a(\xi)=1,\ {\rm ord}(\xi|_{\OF_F^{\times}})=4  $  &  $ bd^{2}e^{3}\left(  b^{3}d^{2}e^{5}-c\right)  \equiv-2\ \operatorname{mod}%
9  $ &  \\\cmidrule{3-4}
&&  $ \omega_{F,\xi},\ F=\Q_3(i) {\rm\ unramified}$, &  $ v_{3}\!\left(  a\right)  =2, ab \equiv 18 \operatorname{mod} 27$ \\
&& $ a(\xi)=2,\ {\rm ord}(\xi|_{\OF_F^{\times}})=6$ &\\\cmidrule{3-4}
&& $\chi\times \chi^{-1}, \ a(\chi)=2,$& $v_{3}\!\left(  a\right)  =2, ab \equiv 9\operatorname{mod}27$ \\
&&${\rm ord}(\chi|_{\Z_3^{\times}})=6$&  \\\cmidrule{3-4}
&& $\chi\times \chi^{-1},\ a(\chi)=1,  $ & $v_{3}\!\left(  a-27b\right)  =6$\\
&&  ${\rm ord}(\chi|_{\Z_3^{\times}})=2$ & \\\cmidrule{3-4}
&&$(\gamma(E/\Q_3),\cdot){\rm St}_{\GL(2,\Q_3)}$ & $n=v_{3}\!\left(  a-27b\right)  -6\geq1$\\
&&$a((\gamma(E/\Q_3),\cdot))=1$&\\\cmidrule{3-4}
&&  $\omega_{F,\xi},\ F=\Q_3(\sqrt{\Delta_T})\ {\rm ramified}$  & $v_{3}\!\left(  a\right)  \equiv1,2\ \operatorname{mod}3,\text{ }%
v_{3}\!\left(  a\right)  >2$ \\
&&$a(\xi)=4,\ {\rm ord}(\xi|_{\OF_F^{\times}})=6$&\\\midrule
$C_3^0$&$\geq 2$ & $\chi\times \chi^{-1},\ a(\chi)=0$ & $v_p(3a)=0$ \\ \cmidrule{2-4}
& $3k+1$  &$\chi\times \chi^{-1},\ a(\chi)=1,$  & $v_{p}\!\left(  a\right) =1, 2$ \\
&& ${\rm ord}(\chi|_{\Z_p^{\times}})=3$   & \\\cmidrule{2-3}
&$3k+2$  & $\omega_{F,\xi},\ F/\Q_p\ {\rm unramified}$  & \\
&&$a(\xi)=1,\ {\rm ord}(\xi|_{\OF_F^{\times}})=3$& \\\midrule
&3 &  $\omega_{F,\xi},\ F=\Q_3(\sqrt{\Delta_T})\ {\rm ramified}$  & $v_{3}\!\left(  a\right)  =0 \text{ and }$ \\
& &  $a(\xi)=2,\ {\rm ord}(\xi|_{\OF_F^{\times}})=6$  &$a\equiv\pm
1,\pm4\ \operatorname{mod}9$ \\\cmidrule{2-4}
& &  $\omega_{F,\xi},\ F=\Q_3(\sqrt{\Delta_T})\ {\rm unramified}$  & $v_{3}\!\left(  a\right)  =0\text{ and }$ \\
& &  $a(\xi)=1,\ {\rm ord}(\xi|_{\OF_F^{\times}})=4$  &$a\equiv\pm
2\ \operatorname{mod}9$ \\\cmidrule{2-4}
&&  $\omega_{F,\xi},\ F=\Q_3(\sqrt{\Delta_T})\ {\rm ramified}$  & $v_{3}\!\left(  a\right)  =1, 2$ \\
& &  $a(\xi)=4,\ {\rm ord}(\xi|_{\OF_F^{\times}})=6$  & 
\end{longtable}
\endgroup}
\vspace{-1em}
\begin{proof}
For $T=C_3$ or $C_3^0$, the minimal discriminant of $E_{T}/\mathbb{Q}_p$ are $\Delta_T=d^4e^8b^3 (a- 27b)$ and $-27a^4$ for $T=C_3$  and $C_3^0$, respectively. When $v_p(\Delta_T)= 0$, the representation $\pi_p$ is the unramified principal series, i.e., $\pi_p=\chi\times \chi^{-1}$ where $\chi$ is a character of  $\Q_p^{\times}$ with $a(\chi)=0$. Next, we observe that $E_{C_{3}^0}$ has 
$j$-invariant $0$ and thus $E_{C_{3}^0}$ has additive reduction at each prime dividing the minimal discriminant. So suppose $T=C_3$ and that $E_T$ has multiplicative reduction at $p$. 
By Theorem~\ref{ThmMultRed}, we have necessary and sufficient conditions on the parameters of $E_T$ to determine when $E_T$ has split (resp. non-split) multiplicative reduction at $p$. In this case, by \cite[Sec.\ 15]{Rohrlich1994} (also see \cite[Theorem~2.1.1]{roy2019paramodular}), the representation is $\pi_p={\rm St}_{\GL(2,\Q_p)}$  (resp. $(\gamma(E/\Q_p),\cdot){\rm St}_{\GL(2,\Q_p)}$).
Here $(\gamma(E/\Q_p),\cdot)$ is the unramified quadratic character of $\Q_p^{\times}$, where $(\cdot, \cdot)$ is the
Hilbert symbol. It remains to consider the cases when $E_T$ has additive reduction at $p$.

Suppose $T=C_3$ or $C_3^0$. Suppose further that $p\neq 3$ and $v_{p}\!\left(  a\right)  \equiv1, 2\ \operatorname{mod}3$. For these cases, 
the N\'{e}ron Type of $E_T$ is $\mathrm{IV}$ or $\mathrm{IV}^{*}$ by \cite[Theorems~3.4~and~3.5]{BarRoy}. Moreover,
$v_p(\Delta_T)=4$ or $8$.

Suppose $p\ge 5$.  Then $(p-1)v_p(\Delta_T) \equiv 0\mod 12 \text{ if and only if } p \equiv 1 \mod 3$. By \cite[Theorem 2.2.1]{roy2019paramodular}, when $p \equiv 1 \mod 3$, we have that
$\pi_p=\chi\times \chi^{-1}$ where $\chi$ is the character of $\Q_p^{\times}$ such that the order of $\chi|_{\Z_p^{\times}}$ is $3$ and $a(\chi)=1$. When $p \equiv 2 \mod 3$, \cite[Theorem~2.2.2]{roy2019paramodular} implies that $\pi_p=\omega_{F, \xi}$ where $F$ is the unramified extension over $\Q_p$ and $\xi$ is the character of $F/\Q_p$ with $a(\xi)=1$ and the order of $\xi|_{\OF_F^{\times}}$ is $3$.

Now consider $p=2$. We know $f_2=2$ by \cite[Theorem~3.5]{BarRoy}. By Lemma~\ref{rep of conductor 2}, there is a unique representation of $\GL(2,\Q_2)$ with conductor $2$. So, we have $\pi_2=\omega_{F, \xi}$ where $F=\Q_2(\sqrt{5})$ is the unramified extension over $\Q_2$ and $\xi$ is the character of $F/\Q_2$ with $a(\xi)=1$.  It follows that the order of $\xi|_{\OF_F^{\times}}$ is $3$ by \cite[Th\'eor\`eme 2]{Kraus1990}. 

It remains to consider the case when $E_T$ has additive reduction at $p=3.$ We proceed by cases below.

\textbf{Case 1.}
Suppose $(i)$ $T=C_3$ with  $v_{3}\!\left(  a\right)  \equiv0\ \operatorname{mod}3$,  $bd^{2}e^{3}\left(  b^{3}d^{2}e^{5}-c\right)  \not \equiv -2\ \operatorname{mod}9$, $v_{3}\!\left(a-27b\right)  =3$ or $(ii)$ $T=C_3^0$ with $v_{3}\!\left(  a\right)  =0\text{ and }a\equiv\pm1,\pm4\ \operatorname{mod}9$. By \cite[Theorems~3.4~and~3.5]{BarRoy}, the N\'{e}ron Type is $\mathrm{II}$ with $f_3=3$. By \cite[Theorem~2.5.3]{roy2019paramodular}, we get $\pi_3=\omega_{F,\xi}$ where $F=\Q_3(\sqrt{\Delta_T})$ is a ramified extension over $\Q_3$ and $\xi$ is the character of $F/\Q_3$ with $a(\xi)=2$ and the order of $\xi|_{\OF_F^{\times}}=6$. 

\textbf{Case 2.} 
Suppose $(i)$ $T=C_3$ with  $v_{3}\!\left(  a\right)  \equiv0\ \operatorname{mod}3$, $bd^{2}e^{3}\left(  b^{3}d^{2}e^{5}-c\right) \equiv -2\ \operatorname{mod}9$, $v_{3}\!\left(a-27b\right)  =3$ or $(ii)$ $T=C_3^0$ with $v_{3}\!\left(  a\right)  =0\text{ and }a\equiv\pm2\ \operatorname{mod}9$. Then, by \cite[Theorems~3.4~and~3.5]{BarRoy} the N\'{e}ron Type is $\mathrm{III}$ with $f_3=2$. Then, by \cite[Theorem~2.5.3]{roy2019paramodular}, we get $\pi_3=\omega_{F,\xi}$ where $F$ is the unramified extension over $\Q_3$ and $\xi$ is the character of $F/\Q_3$ with $a(\xi)=1$ and the order of $\xi|_{\OF_F^{\times}}=4$.

\textbf{Case 3.} If $(i)$ $T=C_3$ with $v_{3}\!\left(  a\right)  \equiv2\ \operatorname{mod}3$ with $v_{3}\!\left(  a\right) >2$ or $(ii)$ $T=C_3^0$ with $v_{3}\!\left(  a\right)  =1$, then by \cite[Theorems~3.4~and~3.5]{BarRoy}, the N\'{e}ron Type at $3$ is $\mathrm{IV}$ and $f_3=5$. Similarly, the N\'{e}ron Type at $3$ is $\mathrm{IV}^{\ast}$ with $f_3=5$ if $(i)$ $T=C_3$ with $v_{3}\!\left(  a\right)  \equiv1\ \operatorname{mod}3$ with $v_{3}\!\left(  a\right) >2$ or $(ii)$ $T=C_3^0$ with $v_{3}\!\left(  a\right)  =2$. By \cite[Theorem~2.5.3]{roy2019paramodular}, we have that $\pi_3=\omega_{F,\xi}$ where $F=\Q_3(\sqrt{\Delta_T})$ is a ramified extension over $\Q_3$ and $\xi$ is the character of $F/\Q_3$ with $a(\xi)=4$ and the order of $\xi|_{\OF_F^{\times}}=6$.

\textbf{Case 4.} Suppose $T=C_3$ and $v_{3}\!\left(  a-27b\right)=5$ (resp. $v_3(a)=1$). By \cite[Theorem~3.5]{BarRoy}, $f_3=3$ and the N\'{e}ron Type is $\mathrm{IV}$ (resp. $\mathrm{IV}^{\ast}$).
Then, by \cite[Theorem~2.5.3]{roy2019paramodular}, we get $\pi_3=\omega_{F,\xi}$ where $F=\Q_3(\sqrt{\Delta_T})$ is a ramified extension over $\Q_3$ and $\xi$ is the character of $F/\Q_3$ with $a(\xi)=2$ and the order of $\xi|_{\OF_F^{\times}}=6$.

\textbf{Case 5.} Suppose $T=C_3$ and $v_3(a)=2$. By \cite[Theorem~3.5]{BarRoy}, the N\'{e}ron Type is $\mathrm{IV}$ with $f_3=4$. Now observe that $\Delta_T \in \Q_3^{\times2}$ if and only if $ab \equiv 9 \mod 27$. 
When $ab \equiv 9 \mod 27$, by \cite[Theorem~2.4.1]{roy2019paramodular} we have $\pi_3=\chi\times \chi^{-1}$ with $a(\chi)=2$ and the order of $\chi|_{\Z_3^{\times}}=6$. When $ab \equiv 18 \mod 27$, by  \cite[Theorem~2.5.3]{roy2019paramodular} we get $\pi_3=\omega_{F,\xi}$ where $F=\Q_3(\sqrt{\Delta_T})$ is the unramified extension over $\Q_3$ and $\xi$ is the character of $F/\Q_3$ with $a(\xi)=2$ and the order of $\xi|_{\OF_F^{\times}}=6$.

\textbf{Case 6.} Suppose $T=C_3$ and $v_{3}\!\left(  a-27b\right)=6$. Then, by \cite[Theorem 3.5]{BarRoy} we get, the N\'{e}ron Type is $\mathrm{I}_0^{\ast}$ with $f_3=2$. By \cite[Theorem~2.5.3]{roy2019paramodular}, we have that $\pi_3=\chi\times \chi^{-1}$ with $a(\chi)=1$ and the order of $\chi|_{\Z_3^{\times}}=2$.

\textbf{Case 7.} Suppose $T=C_3$ and $v_{3}\!\left(  a-27b\right)-6\ge 1$. Then, by \cite[Theorem~3.5]{BarRoy} we get, the N\'{e}ron Type is $\mathrm{I}_n^{\ast}$ with $n=v_{3}\!\left(  a-27b\right)-6$ and $f_3=2$. By \cite[Theorem~2.1.1]{roy2019paramodular}, we have that $\pi_3=(\gamma(E/\Q_3),\cdot){\rm St}_{\GL(2,\Q_3)}$ with $a((\gamma(E/\Q_3),\cdot))=1$ by \eqref{conductor}.

\end{proof}
\subsection{6-Torsion point}
\begin{theorem}
Let  $T=C_{6}$ and $\pi\cong \otimes_{p\le \infty} \pi_p$ be the cuspidal automorphic representation of $\GL(2,\A_{\Q})$ attached to $E_T$. Then, $\pi_{\infty}$ is a holomorphic discrete series representation of weight $2$ and the representations $\pi_p$ at each finite prime $p$ are given~by Table~\ref{repC6}.
\end{theorem}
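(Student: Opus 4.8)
The plan is to mirror the proof of Theorem~\ref{ThmforC3}, treating each finite prime $p$ according to the reduction type of $E_{C_6}/\Q_p$, which is read off from the minimal discriminant $\Delta_{C_6}=u_{C_6}^{-12}a^2b^6(a+9b)(a+b)^3$ of Table~\ref{mindiscs}; the claim that $\pi_\infty$ is a holomorphic discrete series of weight $2$ is immediate from the modularity setup recalled at the start of Section~\ref{sectionreps}, so all the work is local. At the primes of good reduction, where $v_p(\Delta_{C_6})=0$, the representation $\pi_p$ is the unramified principal series $\chi\times\chi^{-1}$ with $a(\chi)=0$, and this settles all but the finitely many primes dividing $\Delta_{C_6}$.

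For the primes of multiplicative reduction, identified by Lemma~\ref{LemmaMult} and Table~\ref{ta:multred0}, I would invoke Theorem~\ref{ThmMultRed} to decide split versus non-split reduction directly from $a$ and $b$. By \cite[Sec.~15]{Rohrlich1994} (equivalently \cite[Theorem~2.1.1]{roy2019paramodular}), split reduction gives $\pi_p=\mathrm{St}_{\GL(2,\Q_p)}$ and non-split reduction gives the unramified quadratic twist $(\gamma(E/\Q_p),\cdot)\mathrm{St}_{\GL(2,\Q_p)}$; this step is merely a transcription of the dichotomy already established in Theorem~\ref{ThmMultRed} into representation-theoretic language.

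The substance is at the additive primes, which for $E_{C_6}$ are only $p=2$ and $p=3$, since every prime $p\ge5$ dividing $\Delta_{C_6}$ divides one of $a,b,a+b,a+9b$ and hence gives multiplicative reduction by Table~\ref{ta:multred0}. At these two primes I would first extract the N\'eron type and conductor exponent $f_p$ from \cite[Main Theorem]{BarRoy}, and then match this data to the explicit dictionary of \cite[Section~2]{roy2019paramodular}. At $p=3$ the types $\mathrm{II},\mathrm{III},\mathrm{IV},\mathrm{IV}^\ast,\mathrm{I}_0^\ast,\mathrm{I}_n^\ast$ are resolved by \cite[Theorems~2.4.1,~2.5.3~and~2.1.1]{roy2019paramodular}, with the quadratic field $F=\Q_3(\sqrt{\Delta_{C_6}})$, its ramification, the conductor $a(\xi)$, and the order of $\xi|_{\OF_F^\times}$ determined by the square class of $\Delta_{C_6}$ in $\Q_3^\times$ and by congruences on $a,b$ modulo powers of $3$; at $p=2$ the low-conductor occurrences are pinned down by Lemma~\ref{rep of conductor 2}, which forces the unique unramified dihedral supercuspidal $\omega_{F,\xi}$ with $F=\Q_2(\sqrt{5})$ and $a(\xi)=1$.

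I expect the main obstacle to be the arithmetic at $p=2$ and $p=3$: correctly identifying the quadratic field $F$ and its ramification, and then the conductor and order of $\xi$, since these hinge on delicate congruences of $a+b$ and $a+9b$ modulo powers of $2$ and $3$ together with the square class of the discriminant. As in Theorem~\ref{ThmforC3}, the cleanest implementation is to pass to the normalized models underlying Table~\ref{ta:multred}, verify each congruence case directly against the conductor formulas of \eqref{conductor} and \eqref{conductor_of_sc}, and corroborate the bookkeeping against the SageMath computations of \cite{GitHubLocalRep}, thereby producing exactly the entries of Table~\ref{repC6}.
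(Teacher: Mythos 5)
Your proposal is correct and follows essentially the same route as the paper's proof: good-reduction primes give the unramified principal series, multiplicative primes are handled by Theorem~\ref{ThmMultRed} together with \cite[Sec.\ 15]{Rohrlich1994}, and the additive primes $p=2,3$ are resolved by combining the N\'eron types and conductor exponents of \cite{BarRoy} with the dictionary of \cite[Section~2]{roy2019paramodular} and Lemma~\ref{rep of conductor 2}. The only minor omission is that Lemma~\ref{rep of conductor 2} alone does not record that ${\rm ord}(\xi|_{\OF_F^{\times}})=3$ at $p=2$; the paper pins this down with \cite[Th\'eor\`eme 2]{Kraus1990} (though it is also implicit in the lemma's proof).
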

{\begingroup

\renewcommand{\arraystretch}{1.2}
 \begin{longtable}{ccc}
	\hline
	$p$  & $\pi_p$ & Conditions on $ a,b$ \\
	\hline

	\endfirsthead
	\hline
	$p$  & $\pi_p$ & Conditions on $a,b$\\
	\hline
	\endhead
	\hline

	\multicolumn{3}{r}{\emph{continued on next page}}
	\endfoot
	\hline
	\caption{Representations for $E_{C_6}$}\label{repC6}
	\endlastfoot
	
$p$ & $\chi\times \chi^{-1},\ a(\chi)=0$ & $v_p(ab(a + 9b)(a + b))=0$ \\ \midrule
		  $p$&$(\gamma(E/\Q_p),\cdot) {\rm St}_{\GL(2,\Q_p)},\ a\left((\gamma(E/\Q_p),\cdot)\right)=0$ & $\text{Type  NS in Table~\ref{ta:multred}}$\\ \midrule
		 &${\rm St}_{\GL(2,\Q_p)}$& $\text{Type  S in Table~\ref{ta:multred}}$\\ \midrule
		  $2$  & $\omega_{F,\xi},\ F/\Q_2\ {\rm unramified}$, $a(\xi)=1,\ {\rm ord}(\xi|_{\OF_F^{\times}})=3$    &  $v_{2}\!\left(  a+b\right)  =1,2 $  \\\midrule
		  $3$  &  $\omega_{F,\xi},\ F/\Q_3\ {\rm unramified}$,\ $ a(\xi)=1,\ {\rm ord}(\xi|_{\OF_F^{\times}})=4$  &  $v_{3}\!\left(  a\right)  =1$\\\midrule
		  &   $\chi\times \chi^{-1},\ a(\chi)=1,\ {\rm ord}(\chi|_{\Z_3^{\times}})=2$  &  $v_{3}\!\left(  a+9b\right)  =2,\ v_{3}%
		\!\left(  a\right)  =2$  \\\midrule
		  &  $(\gamma(E/\Q_3),\cdot){\rm St}_{\GL(2,\Q_3)}$   &  $v_{3}\!\left(  a+9b\right)  \geq
		3$,$\ v_{3}\!\left(  a\right)  =2$  \\\cmidrule{3-3}
		 & $a\left((\gamma(E/\Q_p),\cdot)\right)=1$ & $v_{3}\!\left(  a\right)  \geq3$	
\end{longtable}
\endgroup}
\vspace{-1em}
\begin{proof}
For $T=C_6$, the minimal discriminant of $E_{T}/\mathbb{Q}_p$ is $\Delta_T=a^{2}b^{6}(a+9b)(a+b)^{3}$. When $v_p(\Delta_T)= 0$, the representation $\pi_p$ is the unramified principal series i.e., $\pi_p=\chi\times \chi^{-1}$ where $\chi$ is character of  $\Q_p^{\times}$ with $a(\chi)=0$. 
By Theorem~\ref{ThmMultRed}, we have necessary and sufficient conditions on the parameters of $E_T$ to determine when $E_T$ has split (resp. non-split) multiplicative reduction at $p$. By \cite[Sec.\ 15]{Rohrlich1994} (also see \cite[Theorem~2.1.1]{roy2019paramodular}), the representation is $\pi_p={\rm St}_{\GL(2,\Q_p)}$  (resp. $(\gamma(E/\Q_p),\cdot){\rm St}_{\GL(2,\Q_p)}$). Here $(\gamma(E/\Q_p),\cdot)$ is the unramified quadratic character of $\Q_p^{\times}$, where $(\cdot, \cdot)$ is the
Hilbert symbol.

\textbf{Case 1.} Suppose $p=2$ and $v_{2}\!\left(  a+b\right) =1$ or $2$. Then by \cite[Theorem~3.8]{BarRoy}, the N\'{e}ron Type at $2$ is $\mathrm{IV}$ or $\mathrm{IV}^{*}$ with $f_2=2$.
By Lemma~\ref{rep of conductor 2}, there is a unique representation of $\GL(2,\Q_2)$ with conductor $2$. So, we have $\pi_2=\omega_{F, \xi}$ where $F=\Q_2(\sqrt{5})$ is the unramified extension over $\Q_2$ and $\xi$ is the character of $F/\Q_2$ with $a(\xi)=1$.  It follows from \cite[Th\'eor\`eme 2]{Kraus1990} that the order of $\xi|_{\OF_F^{\times}}$ is $3$.

\textbf{Case 2.} Suppose $v_{3}\!\left(  a\right)=1$. Then by \cite[Theorem~3.8]{BarRoy}, the N\'{e}ron Type is $\mathrm{III}$ with~$f_3=2$. Then, by \cite[Theorem~2.5.3]{roy2019paramodular}, we get $\pi_3=\omega_{F,\xi}$ where $F$ is the unramified extension over $\Q_3$ and $\xi$ is the character of $F/\Q_3$ with $a(\xi)=1$ and the order of $\xi|_{\OF_F^{\times}}=4$.
	
	\textbf{Case 3.} Suppose $v_{3}\!\left(  a\right)=2$ and $v_{3}\!\left(  a+9b\right)  =2$. Then, by \cite[Theorem~3.8]{BarRoy} we get, the N\'{e}ron Type is $\mathrm{I}_0^{\ast}$ with $f_3=2$. By \cite[Corollary 2.4.3]{roy2019paramodular}, we get $\pi_3=\chi\times \chi^{-1}$ with~$a(\chi)=1$ and the order of $\chi|_{\Z_3^{\times}}=2$.
	
	\textbf{Case 4.} Suppose $v_{3}\!\left(  a\right)=2$ and $v_{3}\!\left(  a+9b\right)  \ge 3$ or  $v_{3}\!\left(  a\right)  \geq3$. Then, by \cite[Theorem~3.5]{BarRoy} we get that $f_3=2$ and the N\'{e}ron Type is $\mathrm{I}_n^{\ast}$ with $n=v_{3}\!\left(  a+9b\right)-2\geq1$ or $v_{3}\!\left(  a\right)-4\geq2$, respectively. So, using \cite[Theorem~2.1.1]{roy2019paramodular}, we get $\pi_3=(\gamma(E/\Q_3),\cdot){\rm St}_{\GL(2,\Q_3)}$~with $a(\gamma(E/\Q_3),\cdot)=1$  by \eqref{conductor}.	
\end{proof}

\subsection{Other torsion subgroups}
\begin{theorem}
\label{ThmotherTs}
	Suppose $T$ has a non-trivial point of odd order and $T\neq C_3, C_6$. Let $\pi\cong \otimes_{p\le \infty} \pi_p$ be the cuspidal automorphic representation of $\GL(2,\A_{\Q})$ attached to $E_T$. Then $\pi_{\infty}$ is a holomorphic discrete series representation of weight $2$. If $ v_{p}(\Delta_T)=0$, then $\pi_p=\chi\times \chi^{-1}$ with  $a(\chi)=0$. If $E_T$ satisfies a condition for Type S in Table~\ref{ta:multred}, then $\pi_p={\rm St}_{\GL(2,\Q_p)}$ and if $E_T$ satisfies a condition for Type NS in Table~\ref{ta:multred}, then $\pi_p=(\gamma(E/\Q_p),\cdot){\rm St}_{\GL(2,\Q_p)}$ with $a((\gamma(E/\Q_p),\cdot))=0$. If $E_T$ has additive reduction at $p$, then $\pi_p$ is given as in Table~\ref{repOtherTs}.
\end{theorem}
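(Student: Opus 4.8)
The plan is to mirror the strategy already used in the proofs of Theorem~\ref{ThmforC3} and the theorem for $C_6$: the archimedean factor and the three non-additive local factors are uniform in $T$, and the real work is concentrated in the additive reduction case, which I would resolve prime-by-prime using the N\'eron type data of \cite{BarRoy} together with the explicit dictionary between Weierstrass coefficients and local representations in \cite[Section~2]{roy2019paramodular}.

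First I would dispose of the uniform assertions. Since $E_T/\Q$ is a rational elliptic curve, $\pi_\infty$ is a holomorphic discrete series of weight $2$. At a finite prime $p$ with $v_p(\Delta_T)=0$ the curve has good reduction, so $\pi_p=\chi\times\chi^{-1}$ with $a(\chi)=0$. When $E_T$ has multiplicative reduction, Theorem~\ref{ThmMultRed} already records whether it is split or non-split in terms of the parameters; by \cite[Sec.~15]{Rohrlich1994} (see also \cite[Theorem~2.1.1]{roy2019paramodular}), split reduction gives $\pi_p={\rm St}_{\GL(2,\Q_p)}$ and non-split reduction gives the unramified quadratic twist $(\gamma(E/\Q_p),\cdot){\rm St}_{\GL(2,\Q_p)}$ with $a((\gamma(E/\Q_p),\cdot))=0$. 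These require no case analysis in $T$ and settle every assertion except the last.

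The \emph{additive reduction case} is the substance of the proof. By the proof of Lemma~\ref{LemmaMult} and \cite[Theorem~7.1]{Barrios2020}, additive reduction can occur only at small primes dividing the relevant defining polynomials: $p=5$ for $C_5$ and $C_{10}$, $p=7$ for $C_7$, $p=3$ for $C_9$, and $p\in\{2,3\}$ for $C_{12}$ and $C_2\times C_6$. For each such pair $(T,p)$ I would read off the N\'eron type and conductor exponent $f_p$ from the relevant theorems of \cite{BarRoy}, then invoke the matching identification result in \cite[Section~2]{roy2019paramodular}. For $p\ge5$ the outcome is governed by the congruence $(p-1)v_p(\Delta_T)\bmod12$ (equivalently $p\bmod6$), which separates a ramified principal series from a dihedral supercuspidal with $F/\Q_p$ unramified, exactly as in the $C_3$ computation; for the conductor-$2$ and higher cases the square class of $\Delta_T$ in $\Q_p^\times/\Q_p^{\times2}$ further distinguishes the ramified principal series from the supercuspidal $\omega_{F,\xi}$ with $F=\Q_p(\sqrt{\Delta_T})$. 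The order of the governing character is then pinned down from the N\'eron type.

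The hard part will be the additive reduction at $p=2$ and $p=3$, where the residue field is too small for the generic recipes and one must argue by hand. At $p=2$, whenever $f_2=2$ I would appeal to Lemma~\ref{rep of conductor 2} to conclude that $\pi_2=\omega_{F,\xi}$ is the unique conductor-$2$ representation with $F=\Q_2(\sqrt5)$ unramified and $a(\xi)=1$, and then use \cite[Th\'eor\`eme~2]{Kraus1990} to obtain ${\rm ord}(\xi|_{\OF_F^\times})=3$. At $p=3$ the supercuspidal subcases require separating the ramified extension $\Q_3(\sqrt{\Delta_T})$ from the unramified one and computing both $a(\xi)$ and ${\rm ord}(\xi|_{\OF_F^\times})$ from the N\'eron type via \cite[Theorem~2.5.3]{roy2019paramodular}; this is precisely the delicate analysis already performed for $C_3$, and the same square-class and Hilbert-symbol computations carry over. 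Since there are only finitely many pairs $(T,p)$, each reducing to a table lookup followed by a square-class computation, the argument terminates, and the SageMath verification at \cite{GitHubLocalRep} provides an independent check on the resulting entries of Table~\ref{repOtherTs}.
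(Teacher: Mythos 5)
Your proposal follows essentially the same route as the paper's proof: the archimedean, good, and multiplicative cases are dispatched uniformly via Theorem~\ref{ThmMultRed} and \cite[Sec.\ 15]{Rohrlich1994} (or \cite[Theorem~2.1.1]{roy2019paramodular}), and the additive cases are settled prime-by-prime by reading the N\'eron type and conductor exponent from \cite{BarRoy} and then applying the dictionary of \cite[Section~2]{roy2019paramodular}, with the principal-series versus supercuspidal dichotomy at $p\geq 5$ governed by $(p-1)v_p(\Delta_T)\bmod 12$ exactly as in the paper. One harmless inaccuracy: for $C_{12}$ and $C_{2}\times C_{6}$ additive reduction occurs only at $p=3$ (at $p=2$ these curves have multiplicative reduction, as recorded in Table~\ref{ta:multred}), so the $p=2$ analysis via Lemma~\ref{rep of conductor 2} that you anticipate never arises in this theorem --- a fact the \cite{BarRoy} lookup you propose would itself reveal.
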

	{\begingroup
\renewcommand{\arraystretch}{1.3}
 \begin{longtable}{cccc}
	\hline
	$T$&$p$  & $\pi_p$ & $\text{Conditions on  }a,b $  \\
	\hline

	\endfirsthead
	\hline
	$T$&$p$  & $\pi_p$ & $\text{Conditions on  }a,b $  \\
	\hline
	\endhead
	\hline
	\multicolumn{4}{r}{\emph{continued on next page}}
	\endfoot
	\hline
	\caption{Representations for $E_T$ with $T\neq C_3,\, C_6$}\label{repOtherTs}
	\endlastfoot
		$C_5$&$5$ & $\omega_{F,\xi},\ F/\Q_5\ {\rm unramified},\ a(\xi)=1,\ {\rm ord}\big(\xi|_{\OF_{F}^{\times}}\big)=6$& $v_5(a+18b)=1$\\ \cmidrule{3-4}
		&&$\chi\times \chi^{-1},\ a(\chi)=1,\  {\rm ord}(\chi|_{\Z_5^{\times}})=4$ &$v_5(a+18b)\ge2$\\ \midrule
		$C_7$&$7$ &$\chi\times \chi^{-1},\ a(\chi)=1,\ {\rm ord}(\chi|_{\Z_7^{\times}})=6$  &$v_{7}(a+4b)\ge 1$\\ \midrule
			$C_9$&$3$&$\omega_{F,\xi},\ F/\Q_3\ {\rm ramified},\ a(\xi)=2,\ {\rm ord}\big(\xi|_{\OF_{F}^{\times}}\big)=6$& $v_3(a+b)\ge1$\\ \midrule
			$C_{10}$&$5$&$\chi\times \chi^{-1},\ a(\chi)=1,\ {\rm ord}(\chi|_{\Z_5^{\times}})=4$ & $v_5(a+b)\ge 1$\\ \midrule
			 $C_{12}$&$3$&$(\gamma(E/\Q_3),\cdot){\rm St}_{\GL(2,\Q_3)},\ a(\gamma(E/\Q_3),\cdot)=1$  & $v_3(a)\ge1$\\\midrule
			  	  $C_{2}\times C_6$&$3$&$(\gamma(E/\Q_3),\cdot){\rm St}_{\GL(2,\Q_3)},\ a(\gamma(E/\Q_3),\cdot)=1$ & $v_3(b)\ge1$\\
		\bottomrule
\end{longtable}
\endgroup}
\vspace{-1em}

\begin{proof}
If $v_p(\Delta_T)= 0$ i.e., $E_T$ has good reduction at $p$, then the representation $\pi_p$ is the unramified principal series i.e., $\pi_p=\chi\times \chi^{-1}$ where $\chi$ is character of  $\Q_p^{\times}$ with $a(\chi)=0$. Table~\ref{ta:multred} provides necessary and sufficient conditions on the parameters of $E_T$ to determine when $E_T$ has split (resp. non-split) multiplicative reduction at $p$. By \cite[Sec.\ 15]{Rohrlich1994} (also see \cite[Theorem~2.1.1]{roy2019paramodular}), the representation is $\pi_p={\rm St}_{\GL(2,\Q_p)}$  (resp. $(\gamma(E/\Q_p),\cdot){\rm St}_{\GL(2,\Q_p)}$). Here $(\gamma(E/\Q_p),\cdot)$ is the unramified quadratic character of $\Q_p^{\times}$, where $(\cdot, \cdot)$ is the
Hilbert symbol. 

By \cite[Theorem~3.8]{BarRoy}, there are necessary and sufficient conditions to determine the N\'eron type and conductor exponent of $E_T$ at primes for which $E_T$ has additive reduction. In what follows, we assume this result implicitly. Below we consider the minimal discriminant $\Delta_T$ of $E_T$ as given in Table~\ref{mindiscs}.

\textbf{Case 1.} Let $T=C_5$. Then $E_T$ has additive
reduction at $p$ if and only if $p=5$ and $v_5(a+18b)\ge 1$. Moreover, the N\'eron type of $E$ at $p=5$ is type $\mathrm{II}$ (resp.\ type $\mathrm{III}$) when $v_5(a+18b)= 1$ (resp.\  $v_5(a+18b)\ge 2$). Then from \cite[Tableau I]{Papadopoulos1993} we have $4v_{5}(\Delta_T)\equiv0 \pmod {12}$ with
$e=\frac{12}{(12,v_{5}(\Delta_T))}=4$ if $v_5(a+18b)\ge2$ and $4 v_{5}(\Delta_T)\not \equiv 0
\pmod {12}$ with $e=\frac{12}{(12,v_{5}(\Delta_T))}=6$ if $v_5(a+18b)= 1$. Then, by \cite[Theorem~2.2.1 and Theorem~2.2.2]{roy2019paramodular} we get the described representations.

\textbf{Case 2.} Let $T=C_7$. Then $E_T$ has additive
reduction at $p$ if and only if $p=7$ and $v_7(a+4b)\ge 1$. Moreover,  the N\'eron type of $E$ at $p=7$ is type $\mathrm{II}$ when $v_7(a+4b)\ge 1$. Then from \cite[Tableau I]{Papadopoulos1993} we have $6v_{7}(\Delta_T)\equiv0 \pmod {12}$ with $e=\frac{12}{(12,v_{7}(\Delta_T))}=6$. Then, by \cite[Theorem~2.2.1]{roy2019paramodular} we get $\pi_7=\chi\times \chi^{-1}$ with $ a(\xi)=1$ and the order of  $\chi|_{\Z_7^{\times}}$ is~$6$.

\textbf{Case 3.} Let $T=C_9$. Then $E_T$ has additive
reduction at $p$ if and only if $p=3$ and $v_3(a+b)\ge 1$. Moreover, the N\'eron type of $E$ at $p=3$ is type $\mathrm{IV}$ and $f_3=3$ when $v_3(a+b)\ge 1$. 
Then, by \cite[Theorem~2.5.3]{roy2019paramodular} we get $\pi_3=\omega_{F,\xi}$ where $F=\Q_3(\sqrt{\Delta_T})$ is a ramified extension over $\Q_3$ and $\xi$ is the character of $F/\Q_3$ with $a(\xi)=2$ and the order of $\xi|_{\OF_F^{\times}}=6$.

\textbf{Case 4.} Let $T=C_{10}$. Then $E_T$ has additive
reduction at $p$ if and only if $p=5$ and $v_5(a+b)\ge 1$. In this case, the N\'eron type of $E$ at $p=5$ is type $\mathrm{III}$. Then from \cite[Tableau I]{Papadopoulos1993} we have $4 v_{5}(\Delta_T)\equiv0 \pmod {12}$ with
$e=\frac{12}{(12,v_{5}(\Delta_T))}=4$. Then, by \cite[Theorem~2.2.1]{roy2019paramodular} we get the described representations.

\textbf{Case 5.} Let $T=C_{12}$. Then $E_T$ has additive
reduction at $p$ if and only if $p=3$ and $v_3(a)\ge 1$. In the additive
reduction case, the N\'eron type of $E$ at $p=3$ is type $\mathrm{I}_n^{*}$ with $f_3=2$, where $n=2v_3(a)-1$. Then, by \cite[Sec.\ 15]{Rohrlich1994} (also see \cite[Theorem~2.1.1]{roy2019paramodular}) we get $\pi_3=(\gamma(E/\Q_3),\cdot){\rm St}_{\GL(2,\Q_3)} $ with $ a(\gamma(E/\Q_3))=1$  by \eqref{conductor}.

\textbf{Case 6.} Let $T=C_{2}\times C_6$. Then $E_T$ has additive
reduction at $p$ if and only if $p=3$ and $v_3(b)\ge 1$. Moreover, the N\'eron type of $E$ at $p=3$ is type $\mathrm{I}_n^{*}$ with $f_3=2$ when $v_3(b)\ge 1$. Then, by \cite[Sec.\ 15]{Rohrlich1994} (also see \cite[Theorem~2.1.1]{roy2019paramodular}), we get $\pi_3=(\gamma(E/\Q_3),\cdot){\rm St}_{\GL(2,\Q_3)} $ with $ a(\gamma(E/\Q_3))=1$  by \eqref{conductor}.
\end{proof}

\noindent \textbf{Acknowledgments.} We would like to thank Antonela Trbovi\'c for her helpful comments. We would also like to thank the referee for their detailed comments and suggestions.

\bibliographystyle{amsalpha}
\bibliography{LocalRepTorsion}

\end{document}